\documentclass[11pt, a4paper]{article}
\usepackage[utf8]{inputenc}
\usepackage{amsmath, amsthm, amssymb, amsfonts}
\usepackage{geometry}
\usepackage{graphicx}
\usepackage{hyperref}
\usepackage{enumerate}
\usepackage{booktabs}
\usepackage[numbers]{natbib}
\usepackage{enumitem}

\theoremstyle{plain}
\newtheorem{thm}{Theorem}[section]
\newtheorem{lem}[thm]{Lemma}
\newtheorem{cor}[thm]{Corollary}
\newtheorem{prop}[thm]{Proposition}

\newtheorem{que}[thm]{Question}

\theoremstyle{definition}
\newtheorem{defn}[thm]{Definition}
\newtheorem{exam}[thm]{Example}

\begin{document}

% 标题页
\title{On the $\ell$-th largest degree of an intersecting family}
\author{
  Hao Huang \thanks{Department of Mathematics, National University of Singapore. Email: huanghao@nus.edu.sg. Research supported in part by a start-up grant at NUS and an MOE Academic Research Fund (AcRF) Tier 1 grant A-8003627.}
\and Rui Rao\thanks{Department of Mathematics, National University of Singapore. Email: raorui@u.nus.edu.}
    % 作者二\thanks{电子邮件：author2@institute.edu，机构名称，地址}
}
\date{}
\maketitle

% 摘要
\begin{abstract}
Let $\mathcal{F}\subset\binom{[n]}{k}$ be an intersecting family. For an element $i\in[n]$, the degree of $i$ is the number of sets in $\mathcal{F}$ that contain $i$. Assume that the degrees are ordered as $d_{1}\ge d_{2}\ge\cdots\ge d_{n}$.
Huang and Zhao showed that if $n>2k$, then the minimum degree satisfies $d_{n}\le\binom{n-2}{k-2}$, with the maximum attained by the $1$-star. We strengthen this result by proving that for $n\ge 2k+1$, the $(2k+1)$-th largest degree satisfies $d_{2k+1}\le\binom{n-2}{k-2}$, thereby confirming a conjecture of Frankl and Wang. 
Furthermore, we prove that for large $k$ and $n>12k$, the $(k+2)$-th largest degree  $d_{k+2}$ is already at most $\binom{n-2}{k-2}$. The techniques we developed also yield a tight upper bound for the $(\ell+1)$-th largest degree $d_{\ell+1}$ for $\varepsilon k \le \ell \le k$ and sufficiently large $n>C_{\varepsilon} k$.

\ 

\noindent\textbf{Keywords: Extremal set theory, Intersecting set family, Erd\H{o}s--Ko--Rado theorem}
\end{abstract}

% 数学主题分类
% \subjclass[2020]{Primary 00X00; Secondary 00Y00}

% 致谢（可选）
% \acknowledgements{
%     这里可以感谢基金支持、同事讨论等。例如：
%     本项目得到XX基金（编号XXX）支持。感谢XX教授的有益讨论。
% }

% 自动生成目录
% \tableofcontents

% 正文开始
\section{Introduction}
\label{sec:introduction}
We denote by $[n]$ the standard $n$-element set $\{1, 2, \dots, n\}$, $\binom{[n]}{k}$ the family of all $k$-element subsets of $[n]$, and $2^{[n]}$ by the power set of $[n]$. For a family $\mathcal{F} \subset 2^{[n]}$, we say $\mathcal{F}$ is $t$-intersecting if for any two sets $A, B \in \mathcal{F}$, we have $|A \cap B| \ge t$. If $\mathcal{F}$ is $1$-intersecting, we simply call it an intersecting family. If a $t$-intersecting family $\mathcal{F}$ is contained in $\binom{[n]}{k}$, we call $\mathcal{F}$ a $k$-uniform $t$-intersecting family.
%----------------------
% Let $[n]$ denote the standard n-element set $\{1, 2\dots n\}$ and let $\Comb{[n]}{k}$ be the family of $k$ subsets in $[n]$, $2^{[n]}$ be the power set of $[n]$. For a family $\mathcal{F}\subset 2^{[n]}$, we call $\mathcal{F}$ t-intersecting family if for arbitrary two set $A\in \mathcal{F}, B\in\mathcal{F}$, $|A\cap B|\ge t$, if $\mathcal{F}$ is 1-intersecting, then we just call it intersecting set family. If a t-intersecting family $\mathcal{F}\subset \Comb{[n]}{k}$, we called such $\mathcal{F}$ k-uniform t-intersecting set family.
%----------------------
The famous Erd\H{o}s--Ko--Rado theorem \cite{ErdosKoRado} states that if $\mathcal{F}$ is a $k$-uniform intersecting family, then for $n \ge 2k$, $|\mathcal{F}|\le \binom{n-1}{k-1}$.
%------------------------------------------------------
% Erd\H{o}s-Ko-Rado \cite{todo} proves the famous theorem that if $\mathcal{F}$ is k-uniform t-intersecting, then when $n > (t+1)(k-t+1)$, we can bound the size of $\mathcal{F}$ by:
%---------------------------------------------------------
Moreover, equality holds when $\mathcal{F}$ consists of all $k$-element sets containing a fixed element $i$, i.e. when $\mathcal{F}$ is isomorphic to a $1$-star. For this family, the element $i$ appears in $\binom{n-1}{k-1}$ members of $\mathcal{F}$, while every other element appears for $\binom{n-2}{k-2}$ times. 

For a given family $\mathcal{F}\subset \binom{[n]}{k}$ and $i \in [n]$, denote by $d_i(\mathcal{F})$ the degree of $i$, defined as the number of sets in $\mathcal{F}$ that contain $i$, in other word, $d_i(\mathcal{F}) := |\{A|\,i\in A\in \mathcal{F}\}|.$
%-------------------------------------
% the equation can be achieved when $\mathcal{F}$ contains all size $k$ sets containing $[t]$. We can also consider the degree of vertex: for $i\in [n]$, let $d_i(\mathcal{F})$ denote the degree of $i$, defined by the times $i$ appear in $\mathcal{F}$:
%-------------------------------------------------
When the family $\mathcal{F}$ is clear from context and no confusion arises, we simply write $d_i(\mathcal{F})$ as $d_i$. Without loss of generality, we may assume that the degrees are ordered as $d_1 \ge d_2 \ge d_3 \ge \dots \ge d_n$.
%--------------------------------------------
% When the $\mathcal{F}$ is unique and won't cause misunderstand, we will simply write $d_i(\mathcal{F})$ as $d_i$. Without loss of generality, we can assume that $d_1\ge d_2\ge d_3\dots \ge d_n$.
%------------------------------------------------------------$
In 2017, Huang and Zhao \cite{HuangZhao} used spectral graph theory to prove that in a $k$-uniform intersecting set family $\mathcal{F}$, the minimal degree $d_n(\mathcal{F})$ is at most $\binom{n-2}{k-2}$.

\begin{thm}[Huang, Zhao \cite{HuangZhao}]\label{thm:HuangZhao}
Suppose $n > 2k$ and $\mathcal{F} \subset \binom{[n]}{k}$ is an intersecting family. Then $d_n(\mathcal{F}) \le \binom{n-2}{k-2}$.
\end{thm}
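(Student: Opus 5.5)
We argue by contradiction: suppose every element has degree larger than $\binom{n-2}{k-2}$, i.e.\ $d_i>\binom{n-2}{k-2}$ for all $i\in[n]$. The plan is to combine a counting identity with a spectral (Hoffman-type) bound for the Kneser graph $KG(n,k)$. Let $x\in\mathbb{R}^{\binom{[n]}{k}}$ be the indicator vector of $\mathcal{F}$. Since $\mathcal{F}$ is intersecting, $\mathcal{F}$ is an independent set in $KG(n,k)$, so $x^{T}Ax=0$, where $A$ is the adjacency matrix.

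\textbf{Eigenspace decomposition.} The eigenspaces of $A$ are $V_0\oplus V_1\oplus\cdots\oplus V_k$, with eigenvalues $\lambda_j=(-1)^j\binom{n-k-j}{k-j}$. Here $V_0$ is spanned by the all-ones vector, and $V_0\oplus V_1$ is spanned by the vectors $\chi_i$, where $\chi_i(S)=1$ if $i\in S$. Write $x=x_0+x_1+x_{\ge2}$. The key observation is that the projection of $x$ onto $V_0\oplus V_1$ is determined entirely by the degree sequence, because $\langle x,\chi_i\rangle=d_i$. Explicitly:
\begin{itemize}
\item $\|x_0\|^2=|\mathcal{F}|^2/\binom{n}{k}$;
\item $\|x_1\|^2$ is an explicit quadratic form in $d_i-\bar d$, where $\bar d=k|\mathcal{F}|/n$. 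It equals $c\sum_i (d_i-\bar d)^2$ with $c=\frac{n-1}{\binom{n-2}{k-1}\,n\,(n-k)}\cdot$ (a suitable normalization), which I would compute from the Gram matrix of the $\chi_i$.
\end{itemize}

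\textbf{Main inequality.} Since $\lambda_2$ is the second smallest eigenvalue in absolute-signed order, and $\lambda_j\ge\lambda_3$ for odd $j\ge3$, we have $\lambda_j\ge -\binom{n-k-3}{k-3}$ for all $j\ge2$. Hence
\[
0=x^TAx \ge \lambda_0\|x_0\|^2+\lambda_1\|x_1\|^2-\binom{n-k-3}{k-3}\bigl(|\mathcal{F}|-\|x_0\|^2-\|x_1\|^2\bigr).
\]
Because $\lambda_1=-\binom{n-k-1}{k-1}$ is very negative, this inequality forces $\|x_1\|^2$ to be large relative to $|\mathcal{F}|$. In other words, the degrees must be very unbalanced. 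This is the quantitative expression of the Hilton--Milner phenomenon: a large intersecting family looks like a star.

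\textbf{Using the minimum-degree hypothesis.} Under $d_i>\binom{n-2}{k-2}$ for all $i$, we have $\sum_i d_i=k|\mathcal{F}|$ and each $d_i\le|\mathcal{F}|$. We want to show that $\sum_i(d_i-\bar d)^2$ is too small to satisfy the inequality above. The argument splits into two cases.
\begin{itemize}
\item \emph{$|\mathcal{F}|$ not too close to $\binom{n-1}{k-1}$.} Maximize $\sum_i(d_i-\bar d)^2$ subject to $d_i\ge\binom{n-2}{k-2}$, $\sum_i d_i=k|\mathcal{F}|$, and $d_i\le|\mathcal{F}|$. By convexity the maximum is at an extreme point: one coordinate as large as possible and the rest at the floor $\binom{n-2}{k-2}$.
\item \emph{Near-extremal case.} The extremal configuration above is exactly the star's degree profile. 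So we must show that equality in the spectral inequality forces $x_{\ge2}$ to lie in $V_2$, and also forces near-star structure. Alternatively, invoke Hilton--Milner directly: if $\mathcal{F}$ is not a star then $|\mathcal{F}|\le\binom{n-1}{k-1}-\binom{n-k-1}{k-1}+1$, which gives slack. If $\mathcal{F}$ is contained in a star centered at $1$, then every other $i$ already has $d_i\le\binom{n-2}{k-2}$, a contradiction.
\end{itemize}

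\textbf{Main obstacle.} I expect the hardest step to be this last calibration. We must check that the spectral inequality, with the constraint $d_i>\binom{n-2}{k-2}$, genuinely fails for all non-star intersecting families and for all $n>2k$, not just for large $n$. This requires sharp bookkeeping of the $\lambda_2$ versus $\lambda_1$ gap near $n=2k+1$.

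If the crude bound $\lambda_j\ge\lambda_3$ is too lossy there, I would first try a different weighting: replace $x$ by a weighted test vector, or use the pseudo-adjacency matrix approach of Huang--Zhao. That is, choose a matrix $B$ supported on disjoint pairs, with $\chi_i$-eigenspace eigenvalue adjusted so that the quadratic form directly yields $\min_i d_i\le\binom{n-2}{k-2}$.
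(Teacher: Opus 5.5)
The paper does not prove this statement; it quotes it from Huang--Zhao, whose argument is spectral. So there is no in-paper proof to compare against, and the real question is whether your spectral plan closes.

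\textbf{What is sound.} Your framework is right: $x^{T}Ax=0$ and $\|x_0\|^2=F^2/N$, where $F=|\mathcal F|$ and $N=\binom nk$. Let $W$ be the $k$-set/point inclusion matrix; its column space is $V_0\oplus V_1$. From $W^{T}W=\binom{n-2}{k-2}J+\binom{n-2}{k-1}I$ you get exactly $\|x_1\|^2=\binom{n-2}{k-1}^{-1}\sum_i(d_i-\bar d)^2$, so your constant is simply $c=1/\binom{n-2}{k-1}$.

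\textbf{The gap.} The proposal stops exactly where the proof happens. You never check that the lower bound on $\|x_1\|^2$ from the eigenvalue inequality exceeds the upper bound forced by $d_i>\binom{n-2}{k-2}$. You even leave open that it might fail near $n=2k+1$. Without that comparison there is no contradiction, and the near-extremal discussion does not supply it. Hilton--Milner only lowers the top of the range of $F$, and you would still need the same comparison below it. Also, equality in $\sum_{j\ge2}\lambda_j\|x_j\|^2\ge\lambda_3\|x_{\ge2}\|^2$ would force $x_{\ge2}\in V_3$, not $V_2$.

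\textbf{How it closes.} The comparison works with your crude bound for every $n\ge 2k+1$, with no case split and no Hilton--Milner. Set
\[
a=\binom{n-2}{k-2},\quad b=\binom{n-2}{k-1},\quad \mu=\binom{n-k-1}{k-1},\quad \nu=\binom{n-k-3}{k-3},
\]
and let $S=\|x_1\|^2$, $F_0=na/k$, $F_1=\binom{n-1}{k-1}$.
\begin{enumerate}[label=(\roman*)]
\item Your main inequality gives $S\ge L(F):=\frac{(\lambda_0+\nu)F^2/N-\nu F}{\mu-\nu}$.
\item On the simplex $\{d_i\ge a,\ \sum_i d_i=kF\}$, the strictly convex function $\sum_i(d_i-\bar d)^2$ is maximized only at the vertices $a\mathbf{1}+(kF-na)e_j$; the bound $d_i\le F$ is not needed.
\item Hence $d_i>a$ for all $i$ forces $F>F_0$ and $S<U(F):=\frac{(n-1)(kF-na)^2}{nb}$.
\item Both $L$ and $U$ are quadratics in $F$ with $L(F_1)=U(F_1)=\frac{n-k}{n}F_1$. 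So $L-U=(F-F_1)q(F)$ with $q$ affine.
\item Therefore $L>U$ on $(F_0,F_1)$ follows from $q(F_1)<0$ and $q(F_0)\le0$, that is, from $L'(F_1)<U'(F_1)$ and $L(F_0)\ge0$.
\item Using $\lambda_0=\frac{n-k}{k}\mu$, $F_1/N=k/n$ and $F_0/N=\frac{k-1}{n-1}$, these two conditions reduce to
\[
2(k-1)\mu>(2k-1)\nu\iff 2(n-k-1)(n-k-2)>(2k-1)(k-2),\qquad (n-k-1)(n-k-2)\ge k(k-2).
\]
Both hold for $n\ge 2k+1$. The first fails at $n=2k$, so the crude bound is exactly enough.
\end{enumerate}
For $F\in(F_0,F_1)$ this gives $S\ge L(F)>U(F)>S$. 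At $F=F_1$, the strict inequality $S<U(F_1)=L(F_1)\le S$ gives the contradiction. $F>F_1$ is excluded by Erd\H{o}s--Ko--Rado. For $k=2$ take $\nu=0$; $k=1$ is trivial.

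So the Hilton--Milner step, the equality analysis, and the pseudo-adjacency fallback are all unnecessary. This computation is the proof, and it is what your proposal is missing.
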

\medskip
Recently, Frankl and Wang \cite{FranklWang} observed that in a $k$-uniform intersecting family, the upper bound $\binom{n-2}{k-2}$ in fact governs not only the minimum degree but also most of the degrees. They proved that for $n > 6k-9$, the inequality $d_{2k+1}(\mathcal{F}) \le \binom{n-2}{k-2}$ holds. Moreover, they conjectured that the same bound should remain valid for all $n > 2k$. In other words, every $k$-uniform intersecting family $\mathcal{F} \subset \binom{[n]}{k}$ satisfies $d_{2k+1}(\mathcal{F}) \le \binom{n-2}{k-2}$. If true, this conjecture would substantially strengthen the result of Huang and Zhao (Theorem \ref{thm:HuangZhao}).

In this paper, we verify their conjecture.
% Huang and Zhao using spectral methods to prove that $d_n$ must be small: 
% \begin{thm}[Huang, Zhao, \cite{todo}]\label{HuangZhao}
%     Suppose $n > 2k$, $\mathcal{F}$ is a k-uniform intersecting set family in $\Comb{[n]}{k}$, then $d_n\le \Comb{n-2}{k-2}$
% \end{thm}
% The equation is reached when $\mathcal{F}$ contains all size $k$ set contain 1. Recently, Frankl and Wang sharpen this results when $n>6k-9$:
% They conjectured that $d_{2k+1}(\mathcal{F})\le \Comb{n-2}{k-2}$ for $n\ge 2k+1$, which is proved to be true in this paper.

\begin{thm}\label{thm:2k+1 degree}
Let $n \ge 2k+1$ and let $\mathcal{F} \subset \binom{[n]}{k}$ be an intersecting family. Then
% Suppose $n \ge 2k+1$, $\mathcal{F}$ is a k-uniform intersecting set family, then
    \[d_{2k+1}(\mathcal{F})\le \binom{n-2}{k-2}.\]
\end{thm}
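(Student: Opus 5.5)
The plan is to argue by contradiction. Assume $d_1\ge \dots\ge d_{2k+1}>\binom{n-2}{k-2}$, put $T=[2k+1]$, and aim to show
\[
\sum_{A\in\mathcal F}|A\cap T| \le (2k+1)\binom{n-2}{k-2},
\]
which contradicts $\sum_{i\in T}d_i>(2k+1)\binom{n-2}{k-2}$. I expect this inequality to hold only after discarding trivial configurations, so the first step is a structural reduction.

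\emph{Reduction to non-trivial families.} If $\mathcal F$ is contained in a star centred at some $x$, then every $i\neq x$ has $d_i\le\binom{n-2}{k-2}$. Hence at most one degree exceeds the bound, and the statement follows. So we may assume $\mathcal F$ has covering number $\tau(\mathcal F)\ge 2$. In particular, for every $i$ the family $\mathcal F(\bar i)=\{A\in\mathcal F: i\notin A\}$ is non-empty, and every member of $\mathcal F(i)$ meets every member of $\mathcal F(\bar i)$. We may also assume $\mathcal F$ is maximal intersecting, since enlarging it does not decrease any degree.

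\emph{Reducing to a cross-intersecting estimate.} For a pair $i\ne j$, use the standard decomposition by membership of $i$ and $j$: sets containing both, $i$ only, $j$ only, or neither. Write $\mathcal F(i\bar j)$ for the sets containing $i$ but not $j$, and similarly for the other classes. Then
\[
d_i=|\mathcal F(ij)|+|\mathcal F(i\bar j)|.
\]
The families $\mathcal F(i\bar j)$ and $\mathcal F(\bar i j)$, viewed as $(k-1)$-sets on $[n]\setminus\{i,j\}$, are cross-intersecting. I plan to use the Hilton--Milner-type / cross-intersecting inequality: if $\mathcal A,\mathcal B\subset\binom{[m]}{k-1}$ are cross-intersecting and both non-empty, with $m=n-2\ge 2k-1$, then
\[
|\mathcal A|+|\mathcal B|\le \binom{m}{k-1}-\binom{m-k+1}{k-1}+1 .
\]
Combined with $|\mathcal F(ij)|\le\binom{n-2}{k-2}$, this bounds $d_i+d_j$ from above. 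Summing over a suitable pairing or matching structure inside $T$ should control $\sum_{i\in T}d_i$. That alone is too weak, because the bound on $d_i+d_j$ is roughly $\binom{n-2}{k-2}+\binom{n-3}{k-2}$-ish per pair.

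\emph{Main averaging step.} The real work is an averaging over $T$ that uses all $2k+1$ elements at once. I would try a Katona-type circle argument restricted to $T$. Take a random cyclic order of $T$ together with a random arrangement of $[n]\setminus T$. For each $A\in\mathcal F$, weight it by $|A\cap T|$ and count how many of the "$T$-arcs" it is compatible with. The key point is that $|T|=2k+1>2k$: an intersecting family can contain at most $k$ of the $2k+1$ arcs of length $k$ on a $(2k+1)$-cycle, and if it contains $k$ of them they share a common point. That common point is exactly the trivial star, which we have excluded. So non-triviality should cost at least one arc per cycle, which is the slack needed to beat $(2k+1)\binom{n-2}{k-2}$.

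The main obstacle is converting this arc count into the weighted sum $\sum_A|A\cap T|$ when sets meet $T$ in few elements. Sets with $|A\cap T|$ small live mostly outside $T$ and are not seen by arcs of $T$. For these I would separately use the intersecting condition with the high-degree sets (each $d_i$ large forces many sets through $i$). A Kruskal--Katona shadow bound on $\{A\setminus T\}$ would then show that such sets contribute at most their share in the star-outside-$T$ extremal count $\binom{n-2}{k-2}$ per element. Balancing these two regimes, especially near $n=2k+1$ where there is little room, is where I expect the difficulty to lie.
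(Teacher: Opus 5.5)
\textbf{There is a genuine gap.} Your key inequality $\sum_{A\in\mathcal F}|A\cap T|\le(2k+1)\binom{n-2}{k-2}$ is false even for non-trivial families, so the averaging strategy cannot work as stated.

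Take the Hilton--Milner family of Example~\ref{exam:Hilton Milner} with $k=3$, $n=7$. It is non-trivial and $T=[7]$. Its degrees are $12,6,6,6,3,3,3$, so $\sum_{i\in T}d_i=39>35=7\binom{5}{1}$, although $d_7=3\le 5$. More generally, when $n=2k+1$ we have $T=[n]$, and your target reads $k|\mathcal F|\le(2k+1)\binom{2k-1}{k-2}$. This is equivalent to $|\mathcal F|\le\bigl(1-\frac{k+1}{2k^2}\bigr)\binom{2k}{k-1}$, which the Hilton--Milner family, of size $\binom{2k}{k-1}-k+1$, violates for every $k\ge 2$.

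The root cause is that for $n=2k+1$ the statement is exactly the Huang--Zhao theorem (Theorem~\ref{thm:HuangZhao}), which is a minimum-degree result. The average degree of a non-trivial intersecting family can exceed $\binom{n-2}{k-2}$ while its minimum degree does not. So no upper bound on $\sum_{i\in T}d_i$ can yield the theorem.

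The circle heuristic fails for related reasons. First, non-triviality is a global property and forces no loss on an individual cyclic order. In the Hilton--Milner family, any cyclic order of $T$ in which both neighbours of $1$ lie in $\{2,\dots,k+1\}$ carries all $k$ arcs through $1$. Second, without a shifting step the traces $A\cap T$ need not even be intersecting; for example, $\{1,x\}$ and $\{2,x\}$ with $x\notin T$ have disjoint traces. So the arc bound on the $(2k+1)$-cycle does not apply to them. Your final paragraph about balancing the two regimes is not yet an argument.

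What is missing is a mechanism that singles out \emph{one} element of $T$ of small degree. The paper proceeds as follows.
\begin{enumerate}
\item Make $\mathcal F$ $(2k+1)$-shifted (Proposition~\ref{prop:shifted assumption}). Then the trace $\mathcal G=\{A\cap[2k+1]: A\in\mathcal F\}$ is intersecting on $2k+1$ points (Lemma~\ref{lem:intersecting lemma}(b)).
\item Apply Huang--Zhao to the $k$-th layer of the upward closure of $\mathcal G$. This gives an element $i$ with $d_i^{(k)}\le\binom{2k-1}{k-2}$.
\item A reverse induction using the upper-shadow form of Kruskal--Katona (Corollary~\ref{cor:up shadow KK}) propagates this to $d_i^{(m)}(\mathcal G)\le\binom{2k-1}{m-2}$ for all $m\le k$ (Lemma~\ref{lem:uniform low degree}).
\item Each trace $X$ extends to at most $\binom{n-2k-1}{k-|X|}$ members of $\mathcal F$. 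Vandermonde's identity then gives $d_i(\mathcal F)\le\sum_m\binom{2k-1}{m-2}\binom{n-2k-1}{k-m}=\binom{n-2}{k-2}$.
\end{enumerate}
Some minimum-degree input of Huang--Zhao strength is unavoidable, since the case $n=2k+1$ is that theorem.
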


A natural question is whether we can find more degrees that are bounded by $\binom{n-2}{k-2}$. The following extremal example of the Hilton--Milner theorem \cite{HiltonMilner} provides a construction demonstrating that $d_{k+1}(\mathcal{F}) \ge \binom{n-2}{k-2} + 1$ may occur even for sufficiently large $n$. 
\begin{exam}[Hilton--Milner]\label{exam:Hilton Milner}
    Let $\mathcal{F} \subset \binom{[n]}{k}$ be the $k$-uniform intersecting set family consisting of the following subsets:
    \begin{enumerate}[label=(\alph*), noitemsep, topsep=2pt]
        \item $\{2, 3, \dots, k+1\}$;\smallskip
        \item all $k$-element sets that contain $1$ and intersect the set $\{2, 3, \dots, k+1\}$.
    \end{enumerate}
It is not hard to check that $d_1(\mathcal{F})=\binom{n-1}{k-1}-\binom{n-k-1}{k-1}$, and $d_2(\mathcal{F})=\cdots=d_{k+1}(\mathcal{F})=\binom{n-2}{k-2}+1$.
\end{exam}
Frankl and Wang \cite{FranklWang} showed that for a $k$-uniform $t$-intersecting family $\mathcal{F}$, if $n > \binom{t+2}{2}k^2$, then $d_{k+2}(\mathcal{F}) \le \binom{n-t-1}{k-t-1}$. They also conjectured that the quadratic condition $n > \binom{t+2}{2}k^2$ can be strengthened to a linear range: there exists an absolute constant $c$ such that whenever $n > c k t$, any $k$-uniform $t$-intersecting family $\mathcal{F}$ satisfies $d_{k+2}(\mathcal{F}) \le \binom{n-t-1}{k-t-1}$. We verify their conjecture for $t = 1$.

% Frankl and Wang also prove that when $n$ is large, $d_{k+2}$ should be small:
% \begin{thm}[Frankl-Wang, \cite{todo}]\label{Baseline}
%     Suppose $n > \Comb{t+2}{2}k^2$, and $\mathcal{F}$ is a k-uniform t-intersecting set family, then $d_{k+2}\le \Comb{n-t-1}{k-t-1}$
% \end{thm}
% They also made the following stronger conjecture:

\begin{thm}\label{thm:k+2 degree}
    There exist an absolute constant $C = 12$ such that, for sufficiently large $k$ and for $n > Ck$, let $\mathcal{F} \subset \binom{[n]}{k}$ be an intersecting family. Then
    \[d_{k+2}(\mathcal{F}) \le \binom{n-2}{k-2}.\]
\end{thm}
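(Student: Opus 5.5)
We argue by contradiction. Suppose $\mathcal{F}\subset\binom{[n]}{k}$ is intersecting, $n>12k$, $k$ is large, and $d_1\ge\cdots\ge d_{k+2}>\binom{n-2}{k-2}$. Put $T=[k+2]$; every $i\in T$ then has degree exceeding $\binom{n-2}{k-2}$. The split is on the covering number $\tau(\mathcal{F})$ (the minimum size of a set meeting every member of $\mathcal{F}$), or more robustly on the size of the largest star, i.e.\ on $d_1$.

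\emph{Case 1: $\tau(\mathcal{F})\ge 2$.} Frankl's degree bound (and the Hilton--Milner-type bound) gives $d_1\le\binom{n-1}{k-1}-\binom{n-k-1}{k-1}+1\le k\binom{n-2}{k-2}+1$. Counting incidences with $T$ gives
\[
(k+2)\binom{n-2}{k-2}<\sum_{i\in T}d_i=\sum_{F\in\mathcal{F}}|F\cap T|.
\]
Since every $F$ has $|F\cap T|\le k$, it suffices to bound $\sum_F|F\cap T|$ by $(k+2)\binom{n-2}{k-2}$. Split $\mathcal{F}$ by the value of $|F\cap T|$. Sets meeting $T$ in $s$ points number at most $\binom{k+2}{s}\binom{n-k-2}{k-s}$, and for $s\ge 2$ these counts are lower order compared with $\binom{n-2}{k-2}$ once $n>12k$. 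The quantity that matters is the contribution from $s=1$. By the intersecting property, the sets $F$ with $F\cap T=\{i\}$ for different $i$ must have pairwise intersecting traces $F\setminus T$ outside $T$. This is a cross-intersecting constraint on the families $\mathcal{A}_i=\{F\setminus T: F\cap T=\{i\}\}\subset\binom{[n]\setminus T}{k-1}$. A standard cross-intersecting bound (e.g.\ Hilton's lemma or a Kruskal--Katona based estimate) then shows that either a single $\mathcal{A}_i$ carries almost all the mass, or the total $\sum_i|\mathcal{A}_i|$ is at most about $\binom{n-k-3}{k-2}$ times a constant.

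\emph{Case 2: one element, say $1$, dominates.} This is the near-star situation. Let $\mathcal{F}(\bar 1)$ be the sets avoiding $1$; it is nonempty since $\tau\ge2$. Every set containing $1$ must meet each $G\in\mathcal{F}(\bar1)$. Each $i\in T\setminus\{1\}$ has degree exceeding $\binom{n-2}{k-2}$, and sets through $\{1,i\}$ number at most $\binom{n-2}{k-2}$. Hence each such $i$ lies in some $G\in\mathcal{F}(\bar 1)$, and in fact in roughly $|\mathcal{F}(\bar1)|$-many ways. If all of $\mathcal{F}(\bar1)$ were a single set $G$, we would get $T\setminus\{1\}\subset G$, so $k+1\le k$, a contradiction. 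This is exactly why the bound is $k+2$ and not $k+1$, as Example~\ref{exam:Hilton Milner} shows. In general, let $U=\bigcup\mathcal{F}(\bar1)\supseteq T\setminus\{1\}$. Having $|\mathcal{F}(\bar 1)|\ge2$ forces the sets through $1$ to meet two distinct $k$-sets, which costs roughly $\binom{n-k-1}{k-1}-\binom{n-2k}{k-1}\approx \frac{k^2}{n}\binom{n-2}{k-2}\cdot(\text{const})$ in $d_1$. Set against this loss is the gain $\sum_{i\in T\setminus\{1\}}(d_i-\binom{n-2}{k-2})$, which is at most about $k|\mathcal{F}(\bar1)|$. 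The plan is to make this trade-off quantitative: show $\sum_{i\in T}d_i\le (k+2)\binom{n-2}{k-2}$ by bounding $d_i$ for $i\ne 1$ as
\[
d_i\le \#\{F\ni 1,i\}+\#\{G\in\mathcal{F}(\bar1): i\in G\},
\]
and bounding $\#\{F\ni1,i\}\le\binom{n-2}{k-2}-(\text{sets through }1,i\text{ missing some }G\not\ni i)$. The deficit is at least $\binom{n-k-2}{k-2}$ for each $G\not\ni i$. With $n>12k$, $\binom{n-k-2}{k-2}/\binom{n-2}{k-2}\ge (1-\frac{k}{n-k})^{k}\ge e^{-1/11}\cdot(1-o(1))$, a constant fraction. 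A double-counting over pairs $(i,G)$ with $i\in T\setminus\{1\}$, $i\notin G$ then compares $\sum_i \#\{G\ni i\}\le k|\mathcal{F}(\bar1)|$ with the deficits. At least $|\mathcal{F}(\bar1)|$ pairs have $i\notin G$, since each $G$ misses at least one of the $k+1$ elements. Each such pair costs a constant fraction of $\binom{n-2}{k-2}$, which dwarfs the $+1$ contributions once $|\mathcal{F}(\bar1)|$ is not huge. When $|\mathcal{F}(\bar1)|$ is huge, we fall back to Case 1 counting.

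The main obstacle is making the two regimes (near-star versus far-from-star) meet with the constant $12$. The deficit from distinct pairs $(i,G)$ may overlap, since one set through $\{1,i\}$ can miss several $G$'s. The overlaps must be handled by inclusion--exclusion, or by choosing for each $i$ a single $G_i\not\ni i$ and counting only the sets missing $G_i$. I would try the latter first, and use the Frankl--Wang bound $d_1\ge\ldots$ type shifting reductions (assuming $\mathcal{F}$ is maximal and shifted on $[n]\setminus T$) to control $|\mathcal{F}(\bar1)|$.
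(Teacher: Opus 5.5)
Your outline has genuine gaps, and they sit in the regime the theorem is about: $n=Ck$ with $C$ just above $12$.

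\emph{Case 1.} You treat sets with $|F\cap T|\ge 2$ as lower order. But when $n=Ck$ a typical $k$-set meets $T$ in about $k/C$ points, so these sets carry essentially all the incidences. The trivial count is
\[
\sum_{s\ge 2}s\binom{k+2}{s}\binom{n-k-2}{k-s}=(k+2)\left(\binom{n-1}{k-1}-\binom{n-k-2}{k-1}\right)=(1-o(1))\,\frac{n-1}{k-1}\,(k+2)\binom{n-2}{k-2},
\]
which is about $C$ times your target. Meanwhile the $s=1$ slice that you single out for the cross-intersecting argument is exponentially negligible.

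Worse, the inequality $\sum_{i\in[k+2]}d_i\le(k+2)\binom{n-2}{k-2}$ fails for the Hilton--Milner family (Example~\ref{exam:Hilton Milner}), which has $\tau=2$. There $d_1=(1-o(1))\frac{n-1}{k-1}\binom{n-2}{k-2}$ and $d_2,\dots,d_{k+2}$ are all close to $\binom{n-2}{k-2}$, so the sum is roughly $\bigl(k+1+\frac{n-1}{k-1}\bigr)\binom{n-2}{k-2}$. Your Case 1 ingredients (slice sizes, plus cross-intersection of the $s=1$ fibres) all hold for this family, so they cannot produce the inequality. A correct argument must control the sets with large trace on $T$ through their mutual intersections, and must use the hypothesis on $d_{k+2}$. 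Also, the threshold separating your two cases is never fixed, and both cases assume $\tau\ge 2$.

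\emph{Case 2.} This rests on a miscalculation. For $n=Ck$,
\[
\frac{\binom{n-k-2}{k-2}}{\binom{n-2}{k-2}}=\prod_{j=0}^{k-3}\Bigl(1-\frac{k}{n-2-j}\Bigr)\le\Bigl(1-\frac{1}{C}\Bigr)^{k-2}.
\]
Your lower bound $(1-\frac{k}{n-k})^{k}$ equals $(1-\frac{1}{C-1})^{k}$, which is exponentially small, not $e^{-1/11}$. So each pair $(i,G)$ with $i\notin G$ costs only an exponentially small fraction of $\binom{n-2}{k-2}$. The trade-off against the gain $k|\mathcal{F}(\bar1)|$ then closes only when $|\mathcal{F}(\bar1)|$ is at most about $\binom{n-k-2}{k-2}$. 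Everything larger is deferred to Case 1, which does not work.

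\emph{The missing idea} is the paper's route:
\begin{enumerate}[label=(\roman*), noitemsep, topsep=2pt]
\item Make $\mathcal{F}$ $(k+2)$-shifted. Then disjoint traces $S_1,S_2\subset[k+2]$ have $(k+3-|S_1|-|S_2|)$-cross-intersecting fibres (Lemma~\ref{lem:intersecting lemma}(c)).
\item Hence the heavy small traces form an intersecting family $\mathcal{G}$ (Corollary~\ref{cor:large degree intersecting}).
\item Lemma~\ref{lem:uniform low degree} (Huang--Zhao plus Kruskal--Katona) then yields a single $i$ with $d_i^{(j)}(\overline{\mathcal{G}})\le\binom{k}{j-2}$.
\item When $d_i^{(2)}(\overline{\mathcal{G}})=0$, Vandermonde bounds the heavy traces through $i$ by $\binom{n-2}{k-2}-\binom{n-k-2}{k-2}$. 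All light traces together contribute at most $2^{2k+2-m}\binom{n-k-2}{k-m}\le\binom{n-k-2}{k-2}$, and this is where $n>12k$ enters.
\item When $d_i^{(2)}(\overline{\mathcal{G}})=1$, every set of $\overline{\mathcal{G}}$ contains a common element $1$. Either some $j\neq1$ has $d_j^{(2)}(\overline{\mathcal{G}})=0$ and the previous step applies to $j$, or $\{1,j\}\in\overline{\mathcal{G}}$ for all $j$ and shifting forces $\mathcal{F}$ to be a star at $1$.
\end{enumerate}
This bounds one degree directly rather than a sum, and it handles large traces through their intersection structure. That is precisely what your plan lacks.
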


Next we consider the $\ell$-th largest degree of an intersecting family, Frankl and Wang \cite{FranklWang} determined the maximum possible values for $d_{\ell}(\mathcal{F})$ for $2 \le \ell \le 4$. For the $(\ell+1)$-degree problem when $\ell\ge 4$, inspired by the Hilton--Milner construction, they provided the following example.
\begin{exam}\label{exam:extremal}
    Let $\mathcal{F}$ be the $k$-uniform intersecting family consisting of all the following sets:

    \begin{enumerate}[label=(\alph*), noitemsep, topsep=2pt]
        \item all $k$-element sets containing the element $1$ and at least one element from $\{2, 3, \dots, \ell+1\}$;\smallskip
        \item all $k$-element sets containing the set $\{2, 3, \dots, \ell+1\}$.
    \end{enumerate}
    Then $d_{\ell+1}(\mathcal{F}) = \binom{n-2}{k-2} + \binom{n-\ell-1}{k-\ell}$.
\end{exam}
They also proved that for $4 \le \ell \le k$, $d_{\ell+1}\le \binom{n-2}{k-2} + \binom{n-\ell-1}{k-\ell}$ when $n>2\ell^2k$. In this paper, we obtain an improved range for large $\ell$ via the following theorem.
% We verify that when $t = 1$, this conjecture is true.
% \begin{thm}\label{k+2 degree}
%     Assume $k > 50$, $n > \dfrac{11}{2}k$, $\mathcal{F}$ is a k-uniform intersecting set family, then
%     \[d_{k+2}(\mathcal{F})\le \Comb{n-2}{k-2}\]
% \end{thm}
% Hilton-Milner theorem give an example that $d_{k+1}\ge \Comb{n-2}{k-2}+1$ for arbitrary large $n$, so $k+2$ is the  best possible we can get. The following theorem shows that when $n$ is large enough, Hilton-Milner construction maximize $d_{k+1}$
% \begin{thm}[Frankl, Wang, \cite{todo}]\label{FW l+1 degree}
% Suppose $\mathcal{F}$ is a k-uniform intersecting set family, $4\le l\le k$ and $n>2l^2k$, then
% \[d_{l+1}\le \Comb{n-2}{k-2} + \Comb{n-l-1}{k-l}\]
% \end{thm}

\begin{thm}\label{thm:l+1 degree}
    For any constant $\varepsilon$, there is a constant $C_\varepsilon$ such that, for sufficiently large $k$ and for $\varepsilon k\le \ell \le k+1 $, if $n>C_\varepsilon k$ and $\mathcal{F}\subset \binom{[n]}{k}$ is an intersecting family, then
    \[d_{\ell+1}(\mathcal{F}) \le \binom{n-2}{k-2} + \binom{n-\ell-1}{k-\ell}.\]
\end{thm}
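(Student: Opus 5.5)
We argue by contradiction. Suppose $d_{\ell+1}(\mathcal{F}) > \binom{n-2}{k-2} + \binom{n-\ell-1}{k-\ell}$. Then the $\ell+1$ elements $1,\dots,\ell+1$ of largest degree all have degree above this threshold; call this set $L$. The plan splits according to whether $\mathcal{F}$ has a large ``star part''. Let $x$ be the element of maximum degree, and put $\mathcal{F}(\bar x)=\{F\in\mathcal{F}: x\notin F\}$.

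\textbf{Case 1: $\mathcal{F}(\bar x)$ is small.}

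If $|\mathcal{F}(\bar x)|$ is small (say at most $\binom{n-\ell-1}{k-\ell}$ times a modest factor), consider any $y\in L\setminus\{x\}$. We bound
\[
d_y \le |\mathcal{F}(x,y)| + |\mathcal{F}(\bar x, y)| \le \binom{n-2}{k-2} + |\mathcal{F}(\bar x,y)|.
\]
So every $y\in L\setminus\{x\}$, at least $\ell$ elements, must have $|\mathcal{F}(\bar x,y)| > \binom{n-\ell-1}{k-\ell}$. Sets containing $x$ must meet every member of $\mathcal{F}(\bar x)$. We then use a cross-intersecting/shadow bound (a Hilton--Milner or Frankl-type estimate) to show that $\mathcal{F}(\bar x)$ being spread over $\ell$ elements forces many sets through $x$ to be lost. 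Concretely, $|\mathcal{F}(x)| \le \binom{n-1}{k-1} - (\text{number of }(k-1)\text{-sets avoiding some } G\in \mathcal{F}(\bar x))$. Combined with the degrees of the elements $y$, this yields the contradiction. The extremal configuration, Example~\ref{exam:extremal}, is precisely the one where $\mathcal{F}(\bar x)$ is all sets containing $\{2,\dots,\ell+1\}$, so the counting must be sharp there. I would show that if each of $\ell$ elements $y$ lies in more than $\binom{n-\ell-1}{k-\ell}$ sets of the $k$-uniform family $\mathcal{F}(\bar x)$, then the intersection of all sets in $\mathcal{F}(\bar x)$ cannot contain all of $L\setminus\{x\}$. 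This in turn forces $\mathcal{F}(x,y)$ to fall short of $\binom{n-2}{k-2}$ by more than the gain.

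\textbf{Case 2: $\mathcal{F}(\bar x)$ is large.}

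Here $\mathcal{F}$ is far from a star, and we use a Frankl-type degree bound: if the maximum degree is at most $\binom{n-1}{k-1}-\binom{n-u-1}{k-1}$, then $|\mathcal{F}|$ is at most roughly $k\binom{n-2}{k-2}$ (Hilton--Milner / Frankl diversity). Summing the degrees over $L$ gives
\[
(\ell+1)\Bigl(\binom{n-2}{k-2}+\binom{n-\ell-1}{k-\ell}\Bigr) < \sum_{y\in L} d_y \le k|\mathcal{F}|.
\]
This alone is too weak, so instead I would use a covering-number argument. If the covering number $\tau(\mathcal{F})\ge 2$, fix $F_0\in\mathcal{F}$; every set meets $F_0$. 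Then
\[
d_y \le \sum_{z\in F_0} |\mathcal{F}(y,z)| \le 1 + \sum_{z\ne y}\binom{n-2}{k-2}\cdot(\text{restrictions}).
\]
Next, iterate with a second set avoiding $x$ to gain a factor $k/n$. With $n>C_\varepsilon k$, each $d_y$ is then at most $O(k^2/n)\binom{n-2}{k-2}$ up to structured terms. More precisely, we count sets containing $y$ via their intersections with two or three fixed sets, obtaining $d_y \le \binom{n-2}{k-2}\cdot c\,k/n + \dots$. The condition $\ell\ge\varepsilon k$ enters because the comparison term $\binom{n-\ell-1}{k-\ell}\approx (k/n)^{\ell-1}\binom{n-2}{k-2}$ is exponentially small. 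We therefore need an essentially exact bound $d_y\le \binom{n-2}{k-2}$ for most $y\in L$, and we achieve it by averaging: $\sum_{y\in L}d_y \le \sum_{F\in\mathcal{F}}|F\cap L|$.

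\textbf{Main obstacle.} The hardest step is the transition between the cases. One must show that the excess $\sum_{y\in L}(d_y-\binom{n-2}{k-2})$ is controlled by $\binom{n-\ell-1}{k-\ell}$ unless $\mathcal{F}(\bar x)$ has a common $\ell$-element core. I would first try a shifting/kernel reduction: pass to a minimal generating family, and show that every $G\in\mathcal{F}(\bar x)$ with $|G\cap L|<\ell$ costs $\mathcal{F}(x)$ at least $\binom{n-k-1}{k-2}\cdot\Omega(1)$ sets containing a fixed $y$. This is where $n>C_\varepsilon k$ and $\ell\ge\varepsilon k$ make the loss, of order $(1-k/n)^k\binom{n-2}{k-2}$, dominate the possible gains.
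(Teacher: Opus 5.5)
Your Case 1 idea works, and it closes cleanly with an explicit threshold. If some $G\in\mathcal{F}(\bar x)$ misses $y\in L\setminus\{x\}$, then none of the $\binom{n-k-2}{k-2}$ sets through $x,y$ that avoid $G$ lie in $\mathcal{F}$. So $d_y>\binom{n-2}{k-2}+\binom{n-\ell-1}{k-\ell}$ forces $|\mathcal{F}(\bar x,y)|>\binom{n-k-2}{k-2}$. Hence if $|\mathcal{F}(\bar x)|\le\binom{n-k-2}{k-2}$, every member of $\mathcal{F}(\bar x)$ contains $L\setminus\{x\}$, which gives $|\mathcal{F}(\bar x,y)|\le\binom{n-\ell-1}{k-\ell}$, a contradiction.

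The genuine gap is Case 2, $|\mathcal{F}(\bar x)|>\binom{n-k-2}{k-2}$. This is where the whole difficulty sits, and the proposal contains no argument for it. The estimates you list there fail for different reasons:
\begin{itemize}
\item $\sum_{z\in F_0}|\mathcal{F}(y,z)|$ is trivial; it only gives about $k\binom{n-2}{k-2}$.
\item $O(k^2/n)\binom{n-2}{k-2}$ is vacuous: for $n=Ck$ and large $k$ it exceeds the trivial bound $\binom{n-1}{k-1}\approx C\binom{n-2}{k-2}$.
\item $c\frac{k}{n}\binom{n-2}{k-2}+\dots$ is unsupported, since all the content is in the dots.
\item $\sum_{y\in L}d_y=\sum_{F\in\mathcal{F}}|F\cap L|$ is an identity and constrains nothing.
\end{itemize}

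The single-set loss from your last paragraph does not rescue Case 2 either. There the gain $|\mathcal{F}(\bar x,y)|$ can dwarf $\binom{n-k-2}{k-2}$, and elements lying in every member of $\mathcal{F}(\bar x)$ incur no loss at all. Take $\{F:|F\cap[3]|\ge 2\}$, which falls in Case 2. The elements $2$ and $3$ have degree $\binom{n-2}{k-2}+\binom{n-3}{k-2}$ with no loss. For $y=4$, the single-set loss is at most about $e^{-k^2/n}\binom{n-2}{k-2}$. When $n=Ck$ this is far below $|\mathcal{F}(\bar 1,4)|=\binom{n-4}{k-3}$, which is of order $\frac{k}{n}\binom{n-2}{k-2}$. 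Yet $d_4$ is well below the threshold, because $\mathcal{F}(1,4)$ must cross-intersect all of $\mathcal{F}(\bar 1,\bar 4)$, not just one set. What Case 2 needs is a global statement that at most $\ell$ elements can exceed the threshold simultaneously, and nothing in the outline produces one.

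The paper gets exactly this global control from shifting, not from splitting on $|\mathcal{F}(\bar x)|$.
\begin{itemize}
\item By Proposition \ref{prop:shifted assumption} you may assume $\mathcal{F}$ is $(\ell+1)$-shifted with $[\ell+1]$ the top degrees.
\item Lemma \ref{lem:intersecting lemma}(c) then makes the remainders of two members with disjoint traces $S_1,S_2$ on $[\ell+1]$ $(\ell+2-|S_1|-|S_2|)$-cross-intersecting.
\item By Lemma \ref{lem:cross-intersecting size}, the heavy traces (size at most $m=\lfloor\ell/2\rfloor$, weight at least $2^k\binom{n-k}{k-m}$) therefore form an intersecting family $\mathcal{G}\subset 2^{[\ell+1]}$.
\item Lemma \ref{lem:uniform low degree} (Huang--Zhao plus Kruskal--Katona) gives $i\in[\ell+1]$ with $d_i^{(j)}(\overline{\mathcal{G}})\le\binom{\ell-1}{j-2}$.
\item By Vandermonde, the heavy traces through $i$ contribute at most $\binom{n-2}{k-2}-\binom{n-k-2}{k-2}$, unless $d_{\{1,i\}}(\mathcal{F})>\binom{n-\ell-1}{k-2}-\binom{n-k-2}{k-2}$.
\item The at most $2^{\ell+1}$ light traces contribute at most $2^{k+\ell+1}\binom{n-k}{k-m}\le\binom{n-k-2}{k-2}$. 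This comparison is precisely where $\ell\ge\varepsilon k$ and $n\ge C_\varepsilon k$ are used.
\item In the remaining case every pair trace $\{1,j\}$, $j\neq 1$, exceeds that weight. A member avoiding $1$ and $j$ would shift to one with trace $[\ell+1]\setminus\{1,j\}$, whose $k-\ell+1$ outside elements every $\{1,j\}$-trace set must meet, contradicting the weight.
\item So every set contains $1$ or $\{2,\dots,\ell+1\}$, which is the structure of Example \ref{exam:extremal}.
\end{itemize}
To keep your unshifted route, you would need a genuine substitute for this trace-level structure in Case 2.
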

% We strengthen their results and get the following theorem:
% \begin{thm}\label{l+1 degree}
%    Suppose $\mathcal{F}$ is a k-uniform intersecting set family, $20\le l\le k$ and $n>\dfrac{k^2}{l}+32k$, then
% \[d_{l+1}\le \Comb{n-2}{k-2} + \Comb{n-l-1}{k-l}\] 
% \end{thm}

% \begin{exam}
%     Let $\mathcal{F}$ be the k-uniform intersecting set family contains all following sets:
    
%     \ \ a) all sets contains $1$ and at least one vertex in $\{2, 3\dots l+1\}$

%     \ \ b) all sets contains $\{2, 3\dots l+1\}$

%     Then for such $\mathcal{F}$, 
%     \[d_{l+1}(\mathcal{F}) = \Comb{n-2}{k-2} + \Comb{n-l-1}{k-l}\]
%     In this example the inequality in Theorem \ref{FW l+1 degree} and Theorem \ref{l+1 degree} takes equation.
% \end{exam}

The remainder of the paper is organized as follows. In Section \ref{sec:Shifting Method}, we review the shifting methods that we will apply, and present several related lemmas. Section \ref{sec:2k+1 degree} contains the proof of Theorem \ref{thm:2k+1 degree}. In Section \ref{sec:l+1 degree}, we employ similar techniques to establish both Theorem \ref{thm:k+2 degree} and Theorem \ref{thm:l+1 degree}. Finally, Section \ref{sec:conclusion} concludes with a summary of our results and two open questions.

% 引用文献采用上标形式，如：近年来相关研究取得了显著进展\cite{key1, key2}。

\section{The shifting method}\label{sec:Shifting Method}

In this section we give a brief description of the shifting method. This technique is particularly effective when dealing with intersecting families and will be employed in the proofs of all of our three theorems.

\begin{defn}\label{defn:shift set}
    For a given family $\mathcal{F}$ of subsets and $i, j \in [n]$ with $i \neq j$, the \emph{$ij$-shifting} of a set $A \in \mathcal{F}$ is defined as:
    \[
    A_{ij} = 
    \begin{cases}
        (A \cup \{j\}) \setminus \{i\}, & \text{if } i \in A,\ j \notin A,\ (A \cup \{j\}) \setminus \{i\} \notin \mathcal{F}; \\
        A, & \text{otherwise}.
    \end{cases}
    \]
    The \emph{$ij$-shifting} of the family $\mathcal{F}$ is defined by $\mathcal{F}_{ij} := \{A_{ij} \mid A \in \mathcal{F}\}$.
\end{defn}
% In this section we give a briefly description to the shifting method on intersecting problem. It can be written as the following lemma:

% \begin{defn}
% Let $A\in\mathcal{F}$ is a subset of $[n]$, let $i\not = j \in [n]$, we defined the $ij$-shifting of $A$ as:

% \[A_{ij} = \begin{cases}
%     (A\cup\{j\})\backslash\{i\}, \text{if $i\in A, j\not\in A, (A\cup\{j\}\backslash\{i\})\not\in \mathcal{F}$}\\
%     A, \text{otherwise}
% \end{cases}\]
% \end{defn}

% \begin{defn}
%     The $ij$ shifting of $\mathcal{F}$ is defined as followed: $\mathcal{F}_{ij} : = \{A_{ij}|A\in \mathcal{F}\}$
% \end{defn}

\begin{lem}[Shifting lemma for intersecting family, \cite{DezaFrankl}]\label{lem:shifting}
    If $\mathcal{F}$ is a $t$-intersecting family, then $\mathcal{F}_{ij}$ is also a $t$-intersecting family.
\end{lem}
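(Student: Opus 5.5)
We need to prove the shifting lemma: if $\mathcal{F}$ is $t$-intersecting, so is $\mathcal{F}_{ij}$.

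We take two arbitrary members of $\mathcal{F}_{ij}$, say $A' = A_{ij}$ and $B' = B_{ij}$ with $A, B \in \mathcal{F}$, and show $|A' \cap B'| \ge t$. First note that the map $A \mapsto A_{ij}$ is injective on $\mathcal{F}$, so $|\mathcal{F}_{ij}| = |\mathcal{F}|$. We split into cases according to whether each set was actually moved, i.e.\ whether $A' \ne A$ and whether $B' \ne B$.

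\textbf{Neither set moved.} Then $A' = A$ and $B' = B$, and the claim is just the hypothesis on $\mathcal{F}$.

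\textbf{Both sets moved.} Then $i \in A \cap B$ and $j \notin A \cup B$. We have $A' \cap B' = ((A \cap B) \setminus \{i\}) \cup \{j\}$, which has the same size as $A \cap B$.

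\textbf{Exactly one set moved.} Say $A' = (A \setminus \{i\}) \cup \{j\}$ and $B' = B$. The only way to lose an intersection element is $i \in B$ with $j \notin B$; otherwise $|A' \cap B| \ge |A \cap B| \ge t$, since removing $i$ costs nothing when $i \notin B$, and adding $j$ compensates when $j \in B$.

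So assume $i \in B$ and $j \notin B$. Since $B$ was not moved, we must have $B^* := (B \setminus \{i\}) \cup \{j\} \in \mathcal{F}$. The hypothesis gives $|A \cap B^*| \ge t$. Now $i \notin B^*$ and $j \notin A$, so $A \cap B^* = (A \cap B) \setminus \{i\}$. This set lies inside $A' \cap B$ and avoids $i$. Moreover $j \notin B$, so $A' \cap B = (A \cap B) \setminus \{i\}$. Hence $|A' \cap B'| = |A \cap B^*| \ge t$.

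The symmetric case, where $B$ moved and $A$ did not, is identical.

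The only nontrivial step is this mixed case. The key point is using the \emph{non-movement} condition $B^* \in \mathcal{F}$ rather than $B$ itself, and the one thing to verify carefully there is the identity $A \cap B^* = (A \cap B) \setminus \{i\}$. No earlier results are needed beyond the definition of shifting.
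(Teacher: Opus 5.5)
Your proof is correct and is the standard argument: the case split by which of $A,B$ actually moved, with the non-movement witness $B^*=(B\setminus\{i\})\cup\{j\}\in\mathcal{F}$ handling the only delicate mixed case, is the usual proof of the Deza--Frankl lemma, which the paper cites without proof. The injectivity remark at the start is true but plays no role in establishing the intersection property.
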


The shifting lemma also holds for cross-intersecting families. We say two families $\mathcal{F}_1$ and $\mathcal{F}_2$ are \emph{$t$-cross-intersecting} if $|A \cap B| \ge t$ for all $A \in \mathcal{F}_1$ and $B \in \mathcal{F}_2$. We then have the following lemma:

\begin{lem}[Shifting lemma for cross-intersecting families, \cite{Frankl2014}]\label{lem:cross shifting}
    If two families $\mathcal{F}_1$ and $\mathcal{F}_2$ are $t$-cross-intersecting, then $(\mathcal{F}_1)_{ij}$ and $(\mathcal{F}_2)_{ij}$ are also $t$-cross-intersecting.
\end{lem}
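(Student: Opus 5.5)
The plan is to verify the condition pair by pair: for an arbitrary $A'\in(\mathcal{F}_1)_{ij}$ and $B'\in(\mathcal{F}_2)_{ij}$, show $|A'\cap B'|\ge t$. This mirrors the usual proof of Lemma~\ref{lem:shifting}, except that the two families are shifted separately. Write $A'=A_{ij}$ and $B'=B_{ij}$ with $A\in\mathcal{F}_1$, $B\in\mathcal{F}_2$. Here $A_{ij}$ is computed with respect to $\mathcal{F}_1$ and $B_{ij}$ with respect to $\mathcal{F}_2$. The basic fact used throughout is that a shift only trades $i$ for $j$. So $|A'\cap B'|$ can fall below $|A\cap B|$ only when one set loses $i$ while the other keeps $i$ and does not gain $j$.

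We split into cases according to which of the two sets actually moves.
\begin{itemize}
\item If neither set moves, then $A'=A$ and $B'=B$, and there is nothing to prove.
\item If both move, then $A'=(A\setminus\{i\})\cup\{j\}$ and $B'=(B\setminus\{i\})\cup\{j\}$. The common element $i$ of $A\cap B$ is replaced by the common element $j$, so $|A'\cap B'|=|A\cap B|\ge t$.
\end{itemize}
By symmetry it remains to treat the case where $A$ moves and $B$ does not. Then $A'=(A\setminus\{i\})\cup\{j\}$ with $i\in A$, $j\notin A$, and $A'\notin\mathcal{F}_1$. If $i\notin B$, or if $j\in B$, then $|A'\cap B|\ge|A\cap B|\ge t$.

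The remaining subcase is $i\in B$ and $j\notin B$, and this is where the main work lies. Since $B$ did not move even though $i\in B$ and $j\notin B$, the only reason it stayed fixed is that $B^*:=(B\setminus\{i\})\cup\{j\}$ already belongs to $\mathcal{F}_2$. Cross-intersection of the original families gives $|A\cap B^*|\ge t$. Now $i\notin B^*$ and $j\notin A$, so
\[
A\cap B^*=(A\cap B)\setminus\{i\}=A'\cap B.
\]
Hence $|A'\cap B'|=|A'\cap B|\ge t$. The other asymmetric case, where $B$ moves and $A$ stays, is identical with the roles of $\mathcal{F}_1$ and $\mathcal{F}_2$ exchanged.

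The key step is this last subcase: the set that did not move must be witnessed by its shifted image lying in the \emph{same} original family, and that witness is then tested against the unshifted partner. It is worth noting that the argument never requires $A'\ne B$, nor any relation between the two families beyond cross-intersection. It therefore works for arbitrary $t$-cross-intersecting pairs, including the case $\mathcal{F}_1=\mathcal{F}_2$, which recovers Lemma~\ref{lem:shifting}.
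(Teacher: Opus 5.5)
Your case analysis is correct and complete. The paper gives no proof of its own here; the lemma is simply quoted from Frankl's work. Your argument is the standard one for this fact, and you handle the only nontrivial subcase correctly: when $i\in B$ and $j\notin B$ but $B$ did not move, you use the forced witness $B^*=(B\setminus\{i\})\cup\{j\}\in\mathcal{F}_2$ together with the identity $A\cap B^*=A'\cap B$.

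A minor remark: specializing to $\mathcal{F}_1=\mathcal{F}_2$ to recover Lemma~\ref{lem:shifting} tacitly uses that every member has at least $t$ elements, since cross-intersection includes the pairs $(A,A)$. This is harmless for $k$-uniform families with $k\ge t$.
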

% \begin{lem}[shifting lemma, \cite{todo}]\label{shifting}
% Assume $\mathcal{F}$ is t-intersecting set family, then $\mathcal{F}_{ij}$ is also t-intersecting set family
% \end{lem}

% The shifting lemma is also holds for cross-intersecting family, we say two set family $\mathcal{F}_1$ and $\mathcal{F}_2$ are t-cross-intersecting, if for arbitrary $A\in \mathcal{F}_1, B\in \mathcal{F}_2, |A\cap B|\ge t$, then we have the following lemma:

% \begin{lem}[shifting lemma, \cite{todo}]\label{cross-shifting}
%     Assume $\mathcal{F}_1$ and $\mathcal{F}_2$ are two t-cross-intersecting set families, then $(\mathcal{F}_1)_{ij}$ and $(\mathcal{F}_2)_{ij}$ are also t-cross-intersecting set families.
% \end{lem}

\begin{defn}[Lexicographic order]
    The \emph{lexicographic order} is a total order $<_L$ on $\binom{[n]}{k}$ defined by:
    \[
    A <_L B \quad \text{if and only if} \quad \min(A \setminus B) < \min(B \setminus A).
    \]
    
    The \emph{co-lexicographic order} is a total order $<_C$ on $\binom{[n]}{k}$ defined by:
    \[
    A <_C B \quad \text{if and only if} \quad \max(A \setminus B) < \max(B \setminus A).
    \]
\end{defn}

\begin{defn}[Shadow]
    Let $\mathcal{F} \subset \binom{[n]}{k}$ be a $k$-uniform family. The \emph{shadow} of $\mathcal{F}$, denoted by $\partial\mathcal{F}$, is the $(k-1)$-uniform family defined by:
    \[
    \partial\mathcal{F} := \{ A \mid A = B \setminus \{x\} \text{ for some } x \in B \in \mathcal{F} \}.
    \]
\end{defn}

\begin{thm}[Kruskal–Katona, \cite{Hilton,Katona,Kruskal}]\label{thm:shadow KK}
    Let $\mathcal{F} \subset \binom{[n]}{k}$ with $|\mathcal{F}| = m$, and let $\mathcal{C}(k, m)$ denote the family of the first $m$ $k$-sets in the co-lexicographic order. Then the following inequality holds:
    \[
    |\partial\mathcal{F}| \ge |\partial\mathcal{C}(k, m)|.
    \]
\end{thm}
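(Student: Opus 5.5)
The plan is to reduce to shifted families by compression, and then run a double induction on $k$ and $m$, splitting along the largest element. By the cascade (Macaulay) representation, $|\partial\mathcal{C}(k,m)|$ is a known function of $m$, namely
\[
m=\binom{a_k}{k}+\binom{a_{k-1}}{k-1}+\cdots+\binom{a_s}{s}
\quad\Longrightarrow\quad
|\partial\mathcal{C}(k,m)|=\binom{a_k}{k-1}+\cdots+\binom{a_s}{s-1},
\]
with $a_k>a_{k-1}>\cdots>a_s\ge s\ge1$. So it suffices to prove $|\partial\mathcal{F}|\ge \partial_k(m)$, where $\partial_k(m)$ denotes this right-hand side.

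\textbf{Step 1 (compression).} Take $j<i$ and apply the shift of Definition \ref{defn:shift set}, which replaces the larger element $i$ by the smaller element $j$. It preserves $|\mathcal{F}|$. I would check the standard inclusion $\partial(\mathcal{F}_{ij})\subseteq(\partial\mathcal{F})_{ij}$ by a short case analysis on whether $i$ and $j$ lie in the relevant sets; this gives $|\partial\mathcal{F}_{ij}|\le|\partial\mathcal{F}|$. Each nontrivial shift strictly decreases $\sum_{A\in\mathcal{F}}\sum_{x\in A}x$, so after finitely many shifts we may assume $\mathcal{F}$ is \emph{shifted}: $A\in\mathcal{F}$, $i\in A$, $j\notin A$, $j<i$ imply $(A\setminus\{i\})\cup\{j\}\in\mathcal{F}$.

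\textbf{Step 2 (splitting).} Induct on $n$, and within that on $k$; the base cases $k=1$ and $m\le n$ with $n$ small are trivial. Write
\[
\mathcal{F}(\bar n)=\{A\in\mathcal{F}:n\notin A\},\qquad \mathcal{F}(n)=\{A\setminus\{n\}:n\in A\in\mathcal{F}\}.
\]
Two facts hold for shifted $\mathcal{F}$:
\begin{enumerate}[label=(\roman*), noitemsep, topsep=2pt]
\item The sets of $\partial\mathcal{F}$ containing $n$ include $\{B\cup\{n\}: B\in\partial\mathcal{F}(n)\}$, and the sets of $\partial\mathcal{F}$ not containing $n$ include $\mathcal{F}(n)$. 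Hence $|\partial\mathcal{F}|\ge|\mathcal{F}(n)|+|\partial\mathcal{F}(n)|$.
\item $\partial\mathcal{F}(n)\subseteq\mathcal{F}(\bar n)$. Indeed, if $B=C\setminus\{y\}$ with $C\in\mathcal{F}(n)$, then shifting $n\mapsto y$ in $C\cup\{n\}$ gives $B\cup\{y\}\cdots$, more precisely $C\in\mathcal{F}$, which forces $B\cup\{y\}$-type sets into $\mathcal{F}(\bar n)$ by shiftedness.
\end{enumerate}
From (ii) and the induction hypothesis applied to the $(k-1)$-uniform family $\mathcal{F}(n)$, we get $m-|\mathcal{F}(n)|=|\mathcal{F}(\bar n)|\ge\partial_{k-1}(|\mathcal{F}(n)|)$. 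This lower-bounds $|\mathcal{F}(n)|$ in terms of $m$. Combining with (i) and induction again,
\[
|\partial\mathcal{F}|\ge p+\partial_{k-1}(p)\quad\text{whenever } p+\partial_{k-1}(p)\ge m,
\]
where $p=|\mathcal{F}(n)|$.

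\textbf{Step 3 (numerical lemma, the expected main obstacle).} It remains to show the purely arithmetic statement: if $p+\partial_{k-1}(p)\ge m$, then $p+\partial_{k-1}(p)\ge\partial_k(m)$. The function $p\mapsto p+\partial_{k-1}(p)$ is monotone, so I would take the least admissible $p$ and compare cascades directly. If $m=\binom{a_k}{k}+\cdots$, the natural candidate is $p_0=\binom{a_k-1}{k-1}+\binom{a_{k-1}-1}{k-2}+\cdots$, since Pascal's rule gives $p_0+\partial_{k-1}(p_0)=\partial_k(m)$. The work is to show that any $p$ with $p+\partial_{k-1}(p)\ge m$ satisfies $p\ge p_0$, or directly that $p+\partial_{k-1}(p)\ge\partial_k(m)$. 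The carries in the cascade make this fiddly. I would handle it by induction on $k$, treating separately the cases $p\ge\binom{a_k-1}{k-1}$ and $p<\binom{a_k-1}{k-1}$. In the second case, (ii) forces $|\mathcal{F}(\bar n)|$ to be large, and one instead applies induction to $\mathcal{F}(\bar n)$ on ground set $[n-1]$, using $\partial\mathcal{F}\supseteq\partial\mathcal{F}(\bar n)$.

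If this cascade bookkeeping becomes unmanageable, the fallback is the Frankl/Lovász route: first prove the continuous version $|\partial\mathcal{F}|\ge\binom{x}{k-1}$ for $m=\binom{x}{k}$ by the same splitting, and then refine it to the exact colex bound by iterating over the terms of the cascade.
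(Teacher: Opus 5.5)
\textbf{There is a genuine gap: fact (ii) of Step 2 is wrong, and everything after it fails with it.} The paper only cites this theorem, so your argument has to stand on its own.

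As written, (ii) is not well typed. $\partial\mathcal{F}(n)$ consists of $(k-2)$-sets, while $\mathcal{F}(\bar n)$ consists of $k$-sets. The justification (``shifting $n\mapsto y$ in $C\cup\{n\}$'') also fails, since $y\in C$ already.

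The inequality you extract from it, $m-p\ge\partial_{k-1}(p)$, is false even for shifted families. For $n=k+1\ge 4$ and $\mathcal{F}=\binom{[k+1]}{k}$ one has $m-p=1$ but $\partial_{k-1}(p)=\binom{k}{2}$.

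Even if it held, it would say $p+\partial_{k-1}(p)\le m$. That is an \emph{upper} bound on $p$, whereas Step 3 uses the reverse condition $p+\partial_{k-1}(p)\ge m$. The arithmetic statement of Step 3 is itself false: for $k=2$, $p=1$, $m=2$ we get $p+\partial_1(p)=2\ge m$ but $\partial_2(2)=3$.

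The root cause is the choice of splitting element. After compressing towards small elements, $n$ is the element of \emph{smallest} degree, and $p$ may be $0$ (any $\mathcal{F}\subseteq\binom{[n-1]}{k}$). So no lower bound on $p$ can exist, and the estimate $|\partial\mathcal{F}|\ge p+\partial_{k-1}(p)$ from (i) cannot finish the proof. The second case of Step 3 leans on (ii) again. Using $\partial\mathcal{F}\supseteq\partial\mathcal{F}(\bar n)$ alone only gives $\partial_k(m-p)$.

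\textbf{Repair.} Split at the element of largest degree, namely $1$; this is the standard shifting proof due to Frankl.
\begin{itemize}
\item Let $\mathcal{F}_0=\{A\in\mathcal{F}:1\notin A\}$ and $\mathcal{F}_1=\{A\setminus\{1\}:1\in A\in\mathcal{F}\}$.
\item Shifting $y\mapsto 1$ gives the correctly typed containment $\partial\mathcal{F}_0\subseteq\mathcal{F}_1$. Exactly as in your (i), $|\partial\mathcal{F}|\ge|\mathcal{F}_1|+|\partial\mathcal{F}_1|$.
\item With $q=|\mathcal{F}_1|\ge 1$, induction on $m$ applied to $\mathcal{F}_0$ yields $q\ge\partial_k(m-q)$. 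This really is a lower bound on $q$.
\item The numerical lemma you need is that this forces $q\ge p_0$, with $p_0$ as in your Step 3; $p_0$ is the number of sets of $\mathcal{C}(k,m)$ containing $1$.
\item By monotonicity of $\partial_k$ it suffices to check $\partial_k(m-p_0+1)\ge p_0$. Verify this on the colex segment itself. Take the sets of $\mathcal{C}(k,m)$ avoiding $1$, together with the next colex set avoiding $1$. Their shadow contains every link $B$ of $1$ in $\mathcal{C}(k,m)$: use $B\cup\{z\}$ with $z$ the least element $\ge 2$ not in $B$.
\item Monotonicity of $\partial_{k-1}$ and your Pascal identity then give $|\partial\mathcal{F}|\ge p_0+\partial_{k-1}(p_0)=\partial_k(m)$.
\end{itemize}

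If you want to keep splitting at $n$, the correct relation (for $n>k$) is $\mathcal{F}(n)\subseteq\partial\mathcal{F}(\bar n)$. It gives $|\partial\mathcal{F}|=|\partial\mathcal{F}(\bar n)|+|\partial\mathcal{F}(n)|\ge\max\{p,\partial_k(m-p)\}+\partial_{k-1}(p)$. That route needs a different numerical lemma from the one you stated.
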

% \begin{cor}\label{cross-intersecting degree}
%     Under the same assumption of Theorem \ref{cross-intersecting KK}, we have either $x_1\le \Comb{n-t}{a-t}$ or $x_2\le \Comb{n-t}{b-t}$
% \end{cor}

% \begin{proof}
%     The first $\Comb{n-t}{a-t}$ size $a$ sets in the lexicographic order are those $\Comb{n-t}{a-t}$ size $a$ sets contains $[t]$, similar the first $\Comb{n-t}{b-t}$ size $b$ sets in the lexicographic order are those $\Comb{n-2}{b-t}$ sets contain $[t]$.
% \end{proof}

% \begin{defn}[shadow]
%     Assume $\mathcal{F}$ is a $k$-uniform set family of $[n]$, the shadow of $\mathcal{F}$, denote by $\partial\mathcal{F}$, is the $k-1$-uniform set family defined by:
%     \[\partial\mathcal{F}:=\{A|A = B\backslash \{x\}\text{ for some }x\in B\in \mathcal{F}\}\]
% \end{defn}

% \begin{thm}[Kruskal-Katona, ?]\label{shadow KK}
%     Assume $\mathcal{F}$ is a k-uniform set family, $|\mathcal{F}| = m$. Let $\mathcal{C}(k, m)$ denote the first $m$ k-size set under the co-lexicographic order. Then the following inequality holds:
%     \[|\partial\mathcal{F}|\ge |\partial\mathcal{C}(k, m)|\]
% \end{thm}
\begin{cor}\label{cor:up shadow KK}
    Let $\overline{\partial}\mathcal{F}$ denote the family
    \[
    \overline{\partial}\mathcal{F} = \{ A \mid A = B \cup \{x\} \text{ for some } B \in \mathcal{F},\ x \in [n] \setminus B \}.
    \]
    Suppose $\mathcal{F} \subset \binom{[n]}{k}$ and $|\mathcal{F}| > \binom{n-1}{k-1}$. Then $|\overline{\partial}\mathcal{F}| > \binom{n-1}{k}$.
\end{cor}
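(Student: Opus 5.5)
The plan is to pass to complements and apply Theorem~\ref{thm:shadow KK}. Write $m=|\mathcal{F}|$ and $\mathcal{G}=\{[n]\setminus A : A\in\mathcal{F}\}\subset\binom{[n]}{n-k}$. Then $|\mathcal{G}|=m$. Complementation is a bijection between $\overline{\partial}\mathcal{F}$ and $\partial\mathcal{G}$: a set $B\cup\{x\}$ with $x\notin B$ has complement $([n]\setminus B)\setminus\{x\}$ with $x\in [n]\setminus B$. Hence $|\overline{\partial}\mathcal{F}|=|\partial\mathcal{G}|$, and it suffices to prove that $|\partial\mathcal{G}|>\binom{n-1}{k}$ whenever $\mathcal{G}\subset\binom{[n]}{n-k}$ has $|\mathcal{G}|>\binom{n-1}{k-1}=\binom{n-1}{n-k}$.

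By Theorem~\ref{thm:shadow KK}, $|\partial\mathcal{G}|\ge|\partial\mathcal{C}(n-k,m)|$, so it is enough to understand initial colex segments. The first $\binom{n-1}{n-k}$ sets of size $n-k$ in co-lexicographic order are exactly $\binom{[n-1]}{n-k}$. The shadow of this segment is $\binom{[n-1]}{n-k-1}$, of size $\binom{n-1}{n-k-1}=\binom{n-1}{k}$.

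Since $m>\binom{n-1}{n-k}$, the segment $\mathcal{C}(n-k,m)$ also contains at least one set containing $n$. The next set in colex order after $\binom{[n-1]}{n-k}$ is $[n-k-1]\cup\{n\}$. One of its $(n-k-1)$-subsets is $[n-k-2]\cup\{n\}$, which contains $n$ and therefore is not in $\binom{[n-1]}{n-k-1}$. This requires $n-k-1\ge1$, i.e.\ $n>k$, which we may assume, because $m>\binom{n-1}{k-1}$ forces $\binom{n}{k}>\binom{n-1}{k-1}$. Shadows are monotone under inclusion, so $|\partial\mathcal{C}(n-k,m)|\ge\binom{n-1}{k}+1$, and the strict inequality follows.

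The only step needing care is the boundary case $n-k-1=0$, i.e.\ $k=n-1$. There the shadow consists of $\emptyset$ alone and the bound reads $1>\binom{n-1}{n-1}=1$, which fails. So the statement implicitly needs $k\le n-2$, as holds in all applications where $n\ge 2k+1$. With that assumption, everything else is the bookkeeping above.
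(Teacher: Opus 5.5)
Your proposal is correct and follows essentially the same route as the paper: pass to complements, apply Kruskal--Katona, and observe that the colex initial segment of length $\binom{n-1}{k-1}+1$ has shadow containing $\binom{[n-1]}{n-k-1}$ plus one set through $n$. Your observation that this last step needs $n-k\ge 2$ is a correct refinement, since the statement genuinely fails for $k=n-1$; the paper's proof passes over this case silently, but it never arises in the paper's applications. One small slip: $n-k-1\ge 1$ means $n\ge k+2$, not $n>k$ as you wrote mid-argument.
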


\begin{proof}
    Define $\mathcal{F}^C := \{[n] \setminus A \mid A \in \mathcal{F}\}$. Then $|\mathcal{F}^C| = |\mathcal{F}|$, and
    \[
    \overline{\partial}\mathcal{F} = (\partial\mathcal{F}^C)^C.
    \]
    Hence it suffices to prove that if $|\mathcal{F}^C| > \binom{n-1}{k-1}$, then $|\partial\mathcal{F}^C| > \binom{n-1}{k}$.
    
    By Theorem \ref{thm:shadow KK}, we may assume that $\mathcal{F}^C$ contains the first $\binom{n-1}{k-1} + 1$ $(n-k)$-sets in the co-lexicographic order. These are all the $(n-k)$-sets contained in $[n-1]$, together with one additional set containing the element $n$.
    Consequently, $\partial\mathcal{F}^C$ must contain all $(n-k-1)$-sets contained in $[n-1]$, plus at least one set containing $n$. Thus,
    \[
    |\partial\mathcal{F}^C| \ge \binom{n-1}{n-k-1} + 1 = \binom{n-1}{k} + 1.
    \]
\end{proof}
% \begin{cor}\label{up shadow KK}
%     let $\overline{\partial}\mathcal{F}$ denote the following sets:
% \[\overline{\partial}\mathcal{F} = \{A|A = B\cup\{x\}\text{ for some $B\in\mathcal{F}, x\in [n]\backslash B$}\}\]
% Suppose $\mathcal{F}$ is a k-uniform set family and $|\mathcal{F}|> \Comb{n-1}{k-1}$, then $|\overline{\partial}\mathcal{F}|> \Comb{n-1}{k}$ 
% \end{cor}
% \begin{proof}
%     let $\mathcal{F}^C: = \{[n]\backslash A|A\in \mathcal{F}\}$, then $|\mathcal{F}^C| = |\mathcal{F}|$, and 
%     \[\overline{\partial}\mathcal{F} = (
%     \partial\mathcal{F}^C)^C
%     \]

%     So it's sufficient to prove when $|\mathcal{F}^C|> \Comb{n-1}{k-1}$, $|\partial\mathcal{F}^C|> \Comb{n-1}{k}$

%     By the Theorem \ref{shadow KK}, we can assume that $\mathcal{F}^C$ contains the first $\Comb{n-1}{k-1} + 1$ $(n-k)$-size set in the co-lexicographic order, which is all the $n-k$ sets in $[n-1]$ and one set contain $n$.

%     Therefore $\partial\mathcal{F}^C$ must contains all size $n-k-1$ sets in $[n-1]$ and at least one set contain $n$.
%     \[|\partial\mathcal{F}^C|\ge \Comb{n-1}{n-k-1} + 1 = \Comb{n-1}{k} + 1\]
% \end{proof}

\begin{defn}[$\ell$-shifted family]
    A $k$-uniform family $\mathcal{F} \subset \binom{[n]}{k}$ is called \emph{$\ell$-shifted} if $\mathcal{F}_{ij} = \mathcal{F}$ for every pair $(i,j)$ with $i \notin [\ell]$ and $j \in [\ell]$.
\end{defn}

\begin{prop}\label{prop:shifted assumption}
    Let $\mathcal{F}\subset \binom{[n]}{k}$ be a $t$-intersecting family and the degrees are ordered as $d_1(\mathcal{F})\ge d_2(\mathcal{F})\dots \ge d_n(\mathcal{F})$. Then there exists an $\ell$-shifted $t$-intersecting $k$-uniform family $\mathcal{F}'$ such that $d_i(\mathcal{F}') \ge d_i(\mathcal{F})$ for $i\le \ell$. Furthermore, elements $[\ell]$ are still the $\ell$ elements with the largest degrees in $\mathcal{F'}$. Consequently, the $\ell$-largest degree of $\mathcal{F}'$ is not smaller than $d_\ell(\mathcal{F})$. 
\end{prop}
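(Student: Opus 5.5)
We repeatedly apply $ij$-shifts with $i\notin[\ell]$ and $j\in[\ell]$, starting from $\mathcal{F}$ with degrees ordered so that $[\ell]$ are the $\ell$ elements of largest degree. By Lemma \ref{lem:shifting}, each shift preserves being $t$-intersecting and $k$-uniform. It also preserves $|\mathcal{F}|$, since the map $A\mapsto A_{ij}$ is injective on $\mathcal{F}$: a set is moved only when its image is not already present, and two distinct moved sets have distinct images.

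\textbf{Step 1: degree changes under one shift.} Fix $i\notin[\ell]$ and $j\in[\ell]$. A set $A$ changes only when $i\in A$, $j\notin A$, and $(A\cup\{j\})\setminus\{i\}\notin\mathcal{F}$; it is then replaced by a set containing $j$ but not $i$, with every other element kept. Therefore
\[
d_j(\mathcal{F}_{ij})=d_j(\mathcal{F})+m,\qquad d_i(\mathcal{F}_{ij})=d_i(\mathcal{F})-m,\qquad d_x(\mathcal{F}_{ij})=d_x(\mathcal{F})\ \text{ for } x\ne i,j,
\]
where $m\ge 0$ is the number of moved sets. Consequently, degrees of elements of $[\ell]$ never decrease, and degrees of elements outside $[\ell]$ never increase. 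In particular, for any $x\in[\ell]$ and $y\notin[\ell]$, the inequality $d_x\ge d_y$ is preserved under every such shift. Since it holds initially, at every stage the elements of $[\ell]$ are the $\ell$ elements of largest degree (ties can be broken in favour of $[\ell]$).

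\textbf{Step 2: termination.} Use the potential $\Phi(\mathcal{F})=\sum_{A\in\mathcal{F}}|A\cap[\ell]|$. A nontrivial shift (one with $\mathcal{F}_{ij}\ne\mathcal{F}$) increases $\Phi$ by $m\ge1$. Since $\Phi\le k|\mathcal{F}|$ is bounded, the process stops after finitely many nontrivial shifts. At that point $\mathcal{F}_{ij}=\mathcal{F}'$ for all $i\notin[\ell]$ and $j\in[\ell]$, so the final family $\mathcal{F}'$ is $\ell$-shifted.

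\textbf{Step 3: conclusion.} By Step 1, $d_i(\mathcal{F}')\ge d_i(\mathcal{F})$ for every $i\le\ell$, and $[\ell]$ are still the top $\ell$ degrees of $\mathcal{F}'$. Hence the $\ell$-th largest degree of $\mathcal{F}'$ equals $\min_{i\le\ell}d_i(\mathcal{F}')\ge\min_{i\le\ell}d_i(\mathcal{F})=d_\ell(\mathcal{F})$.

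The only delicate point is the bookkeeping in Step 1: checking that shifts affect only the degrees of $i$ and $j$, and do so by equal and opposite amounts, which relies on the injectivity of the shift. Everything else is routine.
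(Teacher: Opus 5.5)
Your proposal is correct and takes the same route as the paper: repeatedly apply $ij$-shifts with $i\notin[\ell]$ and $j\in[\ell]$, invoke Lemma~\ref{lem:shifting} to keep $t$-intersection, and use that such a shift only raises $d_j$ and lowers $d_i$. Your injectivity check, the potential $\Phi(\mathcal{F})=\sum_{A\in\mathcal{F}}|A\cap[\ell]|$, and the observation that degrees outside $[\ell]$ never increase make rigorous the termination and the claim that $[\ell]$ remain the top $\ell$ degrees, both of which the paper only asserts.
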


\begin{proof}
    If $\mathcal{F}$ is not $\ell$-shifted, then there exist indices $i \notin [\ell]$ and $j \in [\ell]$ such that $\mathcal{F}_{ij} \neq \mathcal{F}$. Replace $\mathcal{F}$ with $\mathcal{F}_{ij}$. By Lemma \ref{lem:shifting}, $\mathcal{F}_{ij}$ remains $t$-intersecting. Note that the $ij$-shifting will not decrease the degree of $k\not= i$, so it won't decrease the degree of $k\in[\ell]$. Therefore elements of $[\ell]$ give the $\ell$ largest degrees of $\mathcal{F'}$, $d_i(\mathcal{F}_{ij})\ge d_i(\mathcal{F})\ge  d_{\ell}(\mathcal{F})$ for $i\le \ell$. We may repeatedly apply the $ij$-shifting for different pairs $(i, j)$ until this process terminates after finitely many steps, yielding the desired family $\mathcal{F}'$.
\end{proof}
% One fact about the shifting method is that, the $ij$-shifting of $\mathcal{F}$ will only decrease the degree $d_j$, hence we can define a $l$-shifted set family as following:
% \begin{defn}[$l$-shifted set family]
%     A $k$-uniform set family $\mathcal{F}$ is called $l$-shifted, if for all $i\not\in l, j\in l$, $\mathcal{F}_{ij} = \mathcal{F}$.
% \end{defn}

% \begin{prop}\label{shifted assumption}
%     Suppose $\mathcal{F}$ is a $t$-intersecting $k$-uniform set family, then there exist a $l$-shifted k-uniform $t$-intersecting set family $\mathcal{F}'$ such that $d_l(\mathcal{F}')\ge d_l(\mathcal{F})$.
% \end{prop}
% \begin{proof}
%    If $\mathcal{F}$ is not $l$-shifted, says $\mathcal{F}_{ij}\not = \mathcal{F}$ for $i>l, j\le l$, then replace $\mathcal{F}$ by $\mathcal{F}_{ij}$, this operation ensure that $d_l(\mathcal{F}_{ij})\ge d_l(\mathcal{F})$, $\mathcal{F}_{ij}$ is still $t$-intersecting by Lemma \ref{shifting}, and the shifting will end in finite step, then we get the $\mathcal{F}'$ we want.
% \end{proof}
In the proof of our theorems, Proposition \ref{prop:shifted assumption} allows us to assume that $\mathcal{F}$ is $\ell$-shifted. Furthermore, we can re-name the elements in the ground set so that the degrees are still in non-increasing order. The resulting family might not be $\ell$-shifted, but then we can apply shiftings again. Since at least one of the $\ell$ largest degrees strictly increases, while there are only finitely many subsets in the family, this process will eventually terminate, and we end up with a $\ell$-shifted family $\mathcal{F}$ whose degrees are in non-increasing order.

\begin{prop}\label{prop:set shifting}
    Let $\mathcal{F}$ be an $\ell$-shifted $t$-intersecting $k$-uniform family. For $A \in \mathcal{F}$, suppose there are elements $j \le \ell$, $i > \ell$ such that $i \in A$ and $j \notin A$. Then $(A \cup \{j\}) \setminus \{i\} \in \mathcal{F}$.
\end{prop}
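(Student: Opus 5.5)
This follows directly from the definition of an $\ell$-shifted family together with Definition \ref{defn:shift set}; no intersecting property is actually needed. I will argue by contradiction.

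Fix $A\in\mathcal{F}$ and elements $j\le\ell<i$ with $i\in A$ and $j\notin A$. Set $B:=(A\cup\{j\})\setminus\{i\}$, and suppose for contradiction that $B\notin\mathcal{F}$.

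We first compute the $ij$-shift of $A$. By Definition \ref{defn:shift set}, all three conditions of the first case hold: $i\in A$, $j\notin A$, and $B\notin\mathcal{F}$. Hence $A_{ij}=B$, and so $B\in\mathcal{F}_{ij}$.

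On the other hand, the pair $(i,j)$ satisfies $i\notin[\ell]$ and $j\in[\ell]$. Since $\mathcal{F}$ is $\ell$-shifted, this gives $\mathcal{F}_{ij}=\mathcal{F}$. Therefore $B\in\mathcal{F}$, contradicting our assumption. We conclude that $B=(A\cup\{j\})\setminus\{i\}\in\mathcal{F}$.

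The argument uses no particular order of shifts and no cardinality count. Membership of $B$ in $\mathcal{F}_{ij}$ is obtained directly as the image $A_{ij}$. The only point needing care is to apply the shifting rule to $A$ with respect to the original family $\mathcal{F}$, as the definition prescribes. The $t$-intersecting hypothesis plays no role here; it is only part of the standing setting in which the proposition will later be applied.
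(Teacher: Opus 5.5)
Your proof is correct and is essentially the paper's argument: both observe that $A_{ij}=(A\cup\{j\})\setminus\{i\}$ lies in $\mathcal{F}_{ij}$, which equals $\mathcal{F}$ because $\mathcal{F}$ is $\ell$-shifted. Your contradiction framing is slightly more careful than the paper's one-line version. The paper asserts $(A\cup\{j\})\setminus\{i\}=A_{ij}$ without noting that this equality holds only when that set is not already in $\mathcal{F}$, and the other case is trivial.
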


\begin{proof}
    Since $\mathcal{F}_{ij} = \mathcal{F}$ by the definition of $\ell$-shifted family, we have $A_{ij} \in \mathcal{F}_{ij} = \mathcal{F}$. Thus $(A \cup \{j\}) \setminus \{i\} = A_{ij}\in \mathcal{F}$.
\end{proof}

Applying Proposition \ref{prop:set shifting}, we obtain the following property.
% \begin{prop}\label{set shifting}
%     Suppose $\mathcal{F}$ is a $l$-shifted $t$-intersecting $k$-uniform set family, $A\in \mathcal{F}, j\le l, i> l$ satisfy $i\in A, j\not\in A$, then $(A\cup\{j\})\backslash\{i\}\in \mathcal{F}$.
% \end{prop}
% \begin{proof}
%     Because $\mathcal{F}_{ij} = \mathcal{F}$, so if $A_{ij}\not = A$, then $A_{ij}\in \mathcal{F}$.
% \end{proof}

% Applying Proposition \ref{set shifting}, we can derive the following property:

\begin{lem}\label{lem:intersecting lemma}
    Let $\mathcal{F}$ be an $\ell$-shifted intersecting $k$-uniform family. Define the (not necessarily uniform) family $\mathcal{G}$ on $[\ell]$ by:
    \[
    \mathcal{G} = \{ A \cap [\ell] \mid A \in \mathcal{F} \}.
    \]
    Then the following statements hold:
    \begin{enumerate}[label=(\alph*)]
        \item $\mathcal{G}$ is upward closed among subsets of $[\ell]$ of size at most $k$, meaning that for all $A\subset B\subset [\ell]$, if $A\in\mathcal{G}$ and $|B|\le k$, then $B\in \mathcal{G}$.
        \item If $\ell \ge 2k$, then $\mathcal{G}$ is intersecting.
        \item If $A, B \in \mathcal{F}$ satisfy $A \cap B \cap [\ell] = \emptyset$, then
        \[
        |A \cap [\ell]| + |B \cap [\ell]| + |A \cap B| \ge \ell + 1.
        \]
    \end{enumerate}
\end{lem}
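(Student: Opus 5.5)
All three parts come from repeated use of Proposition \ref{prop:set shifting}: inside an $\ell$-shifted family we may trade an element outside $[\ell]$ for a missing element of $[\ell]$ and remain in $\mathcal{F}$.

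For (a), it suffices to add one element at a time. Let $A \in \mathcal{G}$, so $A = F \cap [\ell]$ for some $F \in \mathcal{F}$, and let $j \in [\ell] \setminus A$ with $|A \cup \{j\}| \le k$. Since $|F \cap [\ell]| = |A| < k = |F|$, the set $F$ contains some $i > \ell$. Proposition \ref{prop:set shifting} gives $F' = (F \cup \{j\}) \setminus \{i\} \in \mathcal{F}$, and $F' \cap [\ell] = A \cup \{j\}$. Induction on $|B \setminus A|$ then yields every $B$ with $A \subset B \subset [\ell]$ and $|B| \le k$.

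Part (c) is the central step, and I would prove it first, since (b) follows from it. Suppose $A, B \in \mathcal{F}$ with $A \cap B \cap [\ell] = \emptyset$, and assume for contradiction that
\[|A\cap[\ell]| + |B\cap[\ell]| + |A\cap B| \le \ell.\]
Since $A \cap B \cap [\ell] = \emptyset$, all of $A \cap B$ lies outside $[\ell]$. The idea is to shift the elements of $A \cap B$ out of $A$ into free positions of $[\ell]$. The number of elements of $[\ell]$ lying in neither $A$ nor $B$ is
\[\ell - |A\cap[\ell]| - |B\cap[\ell]| \ge |A\cap B|.\]
So we can choose distinct such elements $j_1, \dots, j_m$, where $A \cap B = \{i_1, \dots, i_m\}$. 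Apply Proposition \ref{prop:set shifting} successively, replacing $i_s$ by $j_s$ in $A$. Each step is legal: $i_s$ is still in the current set, $i_s > \ell$, and $j_s \le \ell$ is not in the current set. The resulting $A' \in \mathcal{F}$ satisfies $A' \cap B = \emptyset$: we removed exactly $A \cap B$ and added only elements outside $B$. This contradicts the intersecting property. The only care needed is checking that the $j_s$ stay outside the shifted set at each stage, which holds because they are distinct and were initially outside $A$.

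For (b), take $X = A \cap [\ell]$ and $Y = B \cap [\ell]$ in $\mathcal{G}$, and suppose $X \cap Y = \emptyset$. Then $A \cap B \cap [\ell] = \emptyset$, so (c) gives
\[|X| + |Y| + |A\cap B| \ge \ell + 1 \ge 2k+1.\]
On the other hand, $A \cap B$ lies outside $[\ell]$, so $A \cap B \subset (A \setminus X) \cap (B \setminus Y)$. Hence $|A\cap B| \le \min(k - |X|, k - |Y|)$, and therefore
\[|X| + |Y| + |A\cap B| \le |X| + |Y| + \tfrac{(k-|X|) + (k-|Y|)}{2} \le k + \tfrac{|X|+|Y|}{2} \le 2k,\]
since $|X|, |Y| \le k$. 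This contradicts $\ge 2k+1$, so $\mathcal{G}$ is intersecting.
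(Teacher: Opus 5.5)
Your proof is correct. Parts (a) and (c) match the paper's arguments. In (a) you add one element of $[\ell]$ at a time by trading away an element outside $[\ell]$. In (c) you shift the elements of $A\cap B$, which all lie outside $[\ell]$, into $[\ell]\setminus(A\cup B)$ to produce a member of $\mathcal{F}$ disjoint from $B$.

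The only difference is in (b). The paper proves (b) directly. It picks a $k$-set $C$ with $B\cap[\ell]\subset C\subset[\ell]\setminus A$, which exists because $|[\ell]\setminus A|\ge \ell-k\ge k$. It then shifts $B$ onto $C$, giving a member of $\mathcal{F}$ disjoint from $A$. You instead obtain (b) as a corollary of (c) by counting. The averaging step is not even needed: $|A\cap B|\le |A\setminus[\ell]| = k-|X|$ already gives $|X|+|Y|+|A\cap B|\le k+|Y|\le 2k<\ell+1$.

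Both routes use the same shifting mechanism. Yours makes (b) an immediate consequence of (c) with no separate construction. The paper's version of (b) stands on its own without (c).
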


\begin{proof}
    (a) Suppose $A \in \mathcal{G}$ and $|A| < k$. Then there exists $A_0 \in \mathcal{F}$ such that $A_0 \cap [\ell] = A$. For any $j \in [\ell] \setminus A$, choose an element $i \in A_0 \setminus A$ (note that $i \notin [\ell]$ since $A_0 \cap [\ell] = A$). By Proposition \ref{prop:set shifting},
    \[
    A_1 := (A_0 \cup \{j\}) \setminus \{i\} \in \mathcal{F}.
    \]
    Then $A_1 \cap [\ell] = (A \cup \{j\}) \cap [\ell] = A \cup \{j\}$. Hence $A \cup \{j\} \in \mathcal{G}$ for arbitrary $j \in [\ell]\setminus A$, proving that $\mathcal{G}$ is upward closed for sets of size at most $k$.\medskip

    (b) Suppose for contradiction that $A, B \in \mathcal{F}$ with $A \cap B \cap [\ell] = \emptyset$. Then we know that $B\cap [\ell]\subset [\ell]\setminus A$. Choose a set $C$ satisfy $B\cap [\ell]\subset C\subset [\ell] \setminus A$ with $|C| = k$. By repeatedly applying Proposition \ref{prop:set shifting}, we obtain that $C\in \mathcal{F}$, but $C\cap A = \emptyset$, a contradiction.\medskip

    (c) Assume $A, B \in \mathcal{F}$ with $A \cap B \cap [\ell] = \emptyset$. Let $C = A \cap B$ and suppose, for contradiction, that
    \[
    |C| \le \ell - |A \cap [\ell]| - |B \cap [\ell]|.
    \]
    Then $|[\ell] \setminus (A \cup B)| = \ell - |A \cap [\ell]| - |B \cap [\ell]| \ge |C|$. Choose a set $D \subseteq [\ell] \setminus (A \cup B)$ with $|D| = |C|$. By repeatedly applying Proposition \ref{prop:set shifting} to replace each element of $C$ in $A$ with a distinct element of $D$, we obtain that
    \[
    A' := (A \cup D) \setminus C \in \mathcal{F}.
    \]
    However, $A' \cap B = \emptyset$, contradicting the assumption that $\mathcal{F}$ is intersecting.
\end{proof}

\section{The $(2k+1)$-th largest degree is at most $\binom{n-2}{k-2}$}\label{sec:2k+1 degree}

In this section we prove Theorem \ref{thm:2k+1 degree}, which states that for any intersecting family $\mathcal{F} \subset \binom{[n]}{k}$ with $n > 2k$, its $(2k+1)$-th largest degree satisfies $d_{2k+1}(\mathcal{F}) \le \binom{n-2}{k-2}$.

Proposition \ref{prop:shifted assumption} allows us to assume that $\mathcal{F}$ is $(2k+1)$-shifted. Define the (not necessarily uniform) family $\mathcal{G}$ on $[2k+1]$ by:
\[
\mathcal{G} := \{ A \cap [2k+1] \mid A \in \mathcal{F} \}.
\]
Then part (b) of Lemma \ref{lem:intersecting lemma} guarantees that $\mathcal{G}$ is intersecting. Our general strategy is to find an element $i \in [2k+1]$ such that, for each $j \le k$, $i$ appears in relatively few $j$-element sets of $\mathcal{G}$.

\begin{defn}[$m$-degree]
    Let $\mathcal{G}$ be a (not necessarily uniform) family of subsets of $[n]$. For $i \in [n]$, the \emph{$m$-degree} of $i$ in $\mathcal{G}$, denoted by $d_i^{(m)}(\mathcal{G})$, is the number of sets in $\mathcal{G}$ of size $m$ that contain $i$:
    \[
    d_i^{(m)}(\mathcal{G}) := \bigl| \{ A \in \mathcal{G} \mid i \in A,\ |A| = m \} \bigr|.
    \]
    We shall simply write $d_i^{(m)}$ when the family $\mathcal{G}$ is clear from the context.
\end{defn}
% Proposition \ref{prop:shifted assumption} suggest us to focus on the $2k+1$-shifted intersecting $k$-uniform set family. Let $\mathcal{G}$ denote the following (not uniform) set family:
% \[\mathcal{G}:=\{A\cap [2k+1]|A\in \mathcal{F}\}\]
% Then Lemma \ref{intersecting lemma} b) ensure that $\mathcal{G}$ is intersecting, our general idea is proving that there is an element with low degree in $\mathcal{G}$.

% \begin{defn}[m-degree]
% Let $\mathcal{G}$ to be a (not-uniform) set family of $[n]$, for $i\in [n]$, the m-degree of $i$, denoted by $d_i^{(m)}(\mathcal{G})$, is the number of $m$-size sets contain $i$ in $\mathcal{G}$:
% \[d_i^{(m)}(\mathcal{G}) := \big|\{A\big|i\in A\in\mathcal{G}, |A| = m\}\big|\]
% We will simply write $d_i^{(m)}$ when the set family $\mathcal{G}$ is clear in the context.
% \end{defn}
\begin{lem}\label{lem:uniform low degree}
    Let $n > 2k$ and let $\mathcal{G}$ be a (not necessarily uniform) intersecting family of subsets of $[n]$. Then there exists an element $i \in [n]$ such that for every $m \le k$,
    \[
    d_i^{(m)}(\mathcal{G}) \le \binom{n-2}{m-2}.
    \]
\end{lem}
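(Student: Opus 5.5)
The plan is to reduce to the uniform layers and then find one element that is good for all layers at once. For $m\le k$ let $\mathcal{G}_m:=\{A\in\mathcal{G}:|A|=m\}$. Each $\mathcal{G}_m$ is an $m$-uniform intersecting family on $[n]$ with $n>2k\ge 2m$. Theorem~\ref{thm:HuangZhao} therefore gives, for each $m$ separately, some element whose $m$-degree is at most $\binom{n-2}{m-2}$. For $m=1$ the bound is $0$; the only possibility is $\mathcal{G}_1=\{\{x\}\}$, and then every set of $\mathcal{G}$ contains $x$. So small layers force a very rigid structure.

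\textbf{Main obstacle.} The elements produced layer by layer need not coincide, and I expect finding a common element to be the crux. Let
\[
B_m:=\bigl\{i\in[n] : d_i^{(m)}(\mathcal{G})>\tbinom{n-2}{m-2}\bigr\}.
\]
The goal is to show $\bigcup_{m\le k}B_m\neq[n]$. Simple averaging is not enough, because $m\binom{n-1}{m-1}/n$ exceeds $\binom{n-2}{m-2}$.

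\textbf{Exploiting cross-intersection.} The idea is to use that the layers are pairwise cross-intersecting, not just individually intersecting. First I would make $\mathcal{G}$ upward closed within sizes $\le k$, by adding supersets of size at most $k$. This preserves the intersecting property, only increases $m$-degrees, and makes the statement harder, so we may assume it. Then every layer $\mathcal{G}_m$ is contained in the upper shadow of $\mathcal{G}_{m-1}$. Corollary~\ref{cor:up shadow KK} and its degree-refined analogue should then propagate "badness" upward: heuristically, a star-like concentration at level $m-1$ forces concentration on the same element at level $m$. The target is a monotonicity statement of the form $B_{m}\subseteq B_{m+1}$, or at least $\bigcup_{m\le k}B_m\subseteq B_k\cup(\text{a set already controlled by }B_k)$. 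The common element would then be supplied by Theorem~\ref{thm:HuangZhao} applied to the top layer $\mathcal{G}_k$, possibly with a more careful choice among its low-degree elements.

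\textbf{Fallback if monotonicity fails.} I would instead adapt the Huang--Zhao spectral argument to a weighted combination of layers. Assign each $m$-set of $\mathcal{G}$ the weight $1/\binom{n-2}{m-2}$ and view all layers together as a single independent set in a generalized Kneser graph on $\bigcup_{m\le k}\binom{[n]}{m}$, with disjointness as adjacency. I would then look for a test vector, built from Kneser-type eigenvectors on each level, giving
\[
\min_i \max_m \frac{d_i^{(m)}}{\binom{n-2}{m-2}}\le 1 .
\]
The hardest part is this single-element-for-all-layers step. The per-layer bound is exactly Theorem~\ref{thm:HuangZhao}, and the remaining bookkeeping is routine.
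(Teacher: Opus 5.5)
Your architecture matches the paper's: pass to the upward closure, prove $B_m\subseteq B_{m+1}$, then take the element given by Theorem~\ref{thm:HuangZhao} on the top layer. However, you never carry out the step you yourself call the crux. You only say that Corollary~\ref{cor:up shadow KK} ``and its degree-refined analogue should'' propagate badness ``heuristically,'' and you keep a vague spectral fallback in case monotonicity fails. As written, the proposal does not prove the lemma. You also state the inclusion backwards. Upward closure gives $\overline{\partial}\mathcal{G}_{m-1}\subseteq\mathcal{G}_m$, not $\mathcal{G}_m\subseteq\overline{\partial}\mathcal{G}_{m-1}$. The latter is false in general (e.g.\ when $\mathcal{G}_{m-1}=\emptyset$), and it points the wrong way for pushing badness upward.

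The missing idea is small. No degree-refined Kruskal--Katona is needed, only the Corollary applied to the link of $i$.
\begin{itemize}
\item Fix $i\in B_m$ with $m<k$. Let $\mathcal{L}_m:=\{A\setminus\{i\}: i\in A\in\mathcal{G}_m\}$, an $(m-1)$-uniform family on the $(n-1)$-element ground set $[n]\setminus\{i\}$.
\item The Corollary's threshold for $\mathcal{L}_m$ is $\binom{(n-1)-1}{(m-1)-1}=\binom{n-2}{m-2}$. So $|\mathcal{L}_m|=d_i^{(m)}>\binom{n-2}{m-2}$ gives $|\overline{\partial}\mathcal{L}_m|>\binom{n-2}{m-1}$, with the upper shadow taken inside $[n]\setminus\{i\}$.
\item Take $C=D\cup\{x\}\in\overline{\partial}\mathcal{L}_m$. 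Then $D\cup\{i\}\in\mathcal{G}_m$, so by upward closure $C\cup\{i\}\in\mathcal{G}_{m+1}$, since $m+1\le k$.
\item Hence $d_i^{(m+1)}\ge|\overline{\partial}\mathcal{L}_m|>\binom{n-2}{m-1}$, i.e.\ $i\in B_{m+1}$.
\end{itemize}
Therefore $\bigcup_{m\le k}B_m=B_k$. Theorem~\ref{thm:HuangZhao}, applied to the intersecting $k$-uniform family $\mathcal{G}_k$ with $n>2k$, gives $B_k\neq[n]$. No ``careful choice'' among low-degree elements is needed. The thresholds $\binom{n-2}{m-2}$ in the lemma are exactly the Corollary's thresholds for the link, which is why the reverse induction closes with no loss.
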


\begin{proof}
    Let $\overline{\mathcal{G}}$ denote the upward closure of $\mathcal{G}$:
    \[
    \overline{\mathcal{G}} = \{ A \subseteq [n] \mid A \supseteq B \text{ for some } B \in \mathcal{G} \}.
    \]
    Then $\overline{\mathcal{G}}$ is still an intersecting family. Consequently, the $k$-element sets in $\overline{\mathcal{G}}$ form a $k$-uniform intersecting family on $[n]$ with $n>2k$. By Theorem \ref{thm:HuangZhao}, there exists an element $i \in [n]$ such that $d_i^{(k)}(\overline{\mathcal{G}}) \le \binom{n-2}{k-2}$.

    We now prove by reverse induction on $m$ that
    \[
    d_i^{(m)}(\overline{\mathcal{G}}) \le \binom{n-2}{m-2} \quad \text{for all } m \le k.
    \]
    The base case $m = k$ has already been established. Assume inductively that the statement holds for $m+1$ (where $m+1 \le k$), and suppose for contradiction that
    \[
    d_i^{(m)}(\overline{\mathcal{G}}) > \binom{n-2}{m-2}.
    \]
    Define
    \[
    \overline{\mathcal{G}}_i^{(m)} := \{ A \setminus \{i\} \mid i \in A \in \overline{\mathcal{G}},\ |A| = m \}.
    \]
    This is an $(m-1)$-uniform family on the ground set $[n] \setminus \{i\}$. Because $\overline{\mathcal{G}}$ is upward closed, we can apply Corollary \ref{cor:up shadow KK} to $\overline{\mathcal{G}}_i^{(m)}$ on the ground set $[n]\setminus\{i\}$ and obtain $|\,\overline{\partial}(\overline{\mathcal{G}}_i^{(m)})\,| > \binom{n-2}{m-1}$. Moreover, each $m$-set in $\overline{\partial}(\overline{\mathcal{G}}_i^{(m)})$ corresponds to an $(m+1)$-set in $\overline{\mathcal{G}}$ obtained by adding the element $i$, which yields
    \[
    d_i^{(m+1)}(\overline{\mathcal{G}}) \ge |\,\overline{\partial}(\overline{\mathcal{G}}_i^{(m)})\,| > \binom{n-2}{m-1},
    \]
    contradicting the induction hypothesis for $m+1$. Hence,
    \[
    d_i^{(m)}(\overline{\mathcal{G}}) \le \binom{n-2}{m-2} \quad \text{for all } m \le k.
    \]
    Finally, since every set in $\mathcal{G}$ is contained in  $\overline{\mathcal{G}}$, we have $d_i^{(m)}(\mathcal{G}) \le d_i^{(m)}(\overline{\mathcal{G}})$, which completes the proof.
\end{proof}
% \begin{lem}\label{uniform low degree}
%     Suppose $n>2k$, $\mathcal{G}$ is a (not uniform) intersecting set family of $[n]$, then there is $i\in [n]$, such that for all $m\le k$, $d_i^{(m)}(\mathcal{G})\le \Comb{n-2}{m-2}$.  
% \end{lem}
% \begin{proof}
%     we let $\overline{\mathcal{G}}$ denote the upward closure of $\mathcal{G}$:
%     \[\overline{\mathcal{G}} = \{A|A\supset B\text{ for some $B\in G$}\}\]
%     Then $\overline{\mathcal{G}}$ is still an intersecting set, so all the $k$-size set of $\overline{\mathcal{G}}$ forms a k-uniform intersecting set family of $[n]$. By the Theorem \ref{HuangZhao}, there is some $i\in[n]$ such that $d_i^{(k)}(\overline{\mathcal{G}})\le \Comb{n-2}{k-2}$.

%     Now we induction on $m$ to prove that $d_i^{(m)}(\overline{\mathcal{G}})\le \Comb{n-2}{m-2}$ for all $m\le k$

%     We have proved that when $m = k$, the statement is true, now we assume that the statement is true for $m+1$.

%     Suppose $d_i^{(m)}(\overline{\mathcal{G}})> \Comb{n-2}{m-2}$, let $\overline{\mathcal{G}}_i^{(m)} = \{A\backslash \{i\}\big|i\in A\in \overline{\mathcal{G}}, |A| = m\}$, then $\overline{\mathcal{G}}_i^{(m)}$ is a $m-1$ uniform set family in $[n]\backslash\{i\}$. Because $\overline{\mathcal{G}}$ is upward closed, by the Corollary \ref{up shadow KK},
%     \[d_i^{(m+1)}(\overline{\mathcal{G}})\ge |\overline{\partial}(\overline{\mathcal{G}}_i^{(m)})| > \Comb{n-2}{m-1}\]
%     Contradiction to the induction hypothesis. Therefore,
%     \[d_i^{(m)}(\mathcal{G})\le d_i^{(m)}\overline{\mathcal{G}} = \Comb{n-2}{m-2}\]
%     For all $m\le k$.
% \end{proof}

\noindent\textbf{Proof of Theorem \ref{thm:2k+1 degree}.}
By Proposition \ref{prop:shifted assumption}, we may assume that $\mathcal{F}$ is $(2k+1)$-shifted, and $d_1, d_2, \dots, d_{2k+1}$ are the $(2k+1)$ largest degrees.

Define $\mathcal{G} = \{ A \cap [2k+1] \mid A \in \mathcal{F} \}$. By part (b) of Lemma \ref{lem:intersecting lemma}, $\mathcal{G}$ is an intersecting family on $[2k+1]$. Applying Lemma \ref{lem:uniform low degree} with $n = 2k+1$, we obtain an element $i \in [2k+1]$ such that for every $m \le k$,
\[
d_i^{(m)}(\mathcal{G}) \le \binom{2k-1}{m-2}.
\]

Observe that for a fix set $A \in \mathcal{G}$. There are at most $\binom{n-2k-1}{k - |A|}$ sets $B \in \mathcal{F}$ satisfying $B \cap [2k+1] = A$. Consequently, we can bound the degree of $i$ in $\mathcal{F}$ as follows:
\[
\begin{aligned}
d_i(\mathcal{F}) &\le \sum_{m=1}^{k} d_i^{(m)}(\mathcal{G}) \binom{n-2k-1}{k-m} \\
&\le \sum_{m=1}^{k} \binom{2k-1}{m-2} \binom{n-2k-1}{k-m} \\
&= \binom{n-2}{k-2}.
\end{aligned}
\]

Hence, $d_{2k+1}(\mathcal{F}) \le d_i(\mathcal{F}) \le \binom{n-2}{k-2}$, which completes the proof. \qed

% \noindent\textbf{Proof of theorem \ref{2k+1 degree}:}
% By Proposition \ref{shifted assumption}, we can assume that $\mathcal{F}$ is also $(2k+1)$-shifted.

% Take $\mathcal{G} = \{A\cap[2k+1]|A\in \mathcal{F}\}$, then by Lemma \ref{intersecting lemma}, b), $\mathcal{G}$ is an intersecting set family of $[2k+1]$. 

% By Lemma \ref{uniform low degree}, there is some $i\in [2k+1]$ such that for all $m\le k$, $d_i^{(m)}(\mathcal{G})\le \Comb{2k-1}{m-2}$.

% Fix $A\subset \mathcal{G}$, there are at most $\Comb{n-2k-1}{k-|A|}$ $k$-size set $B\in\mathcal{F}$ such that $B\cap [2k+1] = A$.

% Therefore, we can estimate the degree of $i$ in $\mathcal{F}$ by:
% \[\begin{aligned}
%     d_i(\mathcal{F})\le \sum_{m\le k} d_i^{(m)}(\mathcal{G})\Comb{n-2k-1}{k-m}\le \sum_{m\le k}\Comb{2k-1}{m-2}\Comb{n-2k-1}{k-m} = \Comb{n-2}{k-2}
% \end{aligned}\]

% Therefore $d_{2k+1}(\mathcal{F})\le d_i(\mathcal{F})\le \Comb{n-2}{k-2}$.

\ 

\section{Estimating the $(\ell+1)$-th largest degree} \label{sec:l+1 degree}

In this section, we first prove Theorem \ref{thm:k+2 degree}, which states that for $k$ sufficiently large and $n > 12k$, any intersecting family $\mathcal{F} \subset \binom{[n]}{k}$ satisfies $d_{k+2}(\mathcal{F}) \le \binom{n-2}{k-2}$. As we mentioned earlier, one cannot hope for $d_{k+1}(\mathcal{F}) \le \binom{n-2}{k-2}$ due to the Hilton--Milner example. Therefore to have the $1$-star as the extremal example, the index $k+2$ is best possible. We then apply similar techniques to prove Theorem \ref{thm:l+1 degree}, which determines the maximum value of $d_{\ell+1}(\mathcal{F})$ when $\ell = \Theta(k) $ and $n$ is sufficiently large.

For a fixed $\ell$ and a subset $S\subset[\ell]$, we define the \emph{degree of $S$} (with respect to $\mathcal{F}$) as the size of the family
\[
d_S(\mathcal{F}) = \bigl| \{ A \in \mathcal{F} \mid A \cap [\ell] = S \} \bigr|.
\]
We first prove that if $\mathcal{A}\subset \binom{[n]}{a}$ and $\mathcal{B}\subset\binom{[n]}{b}$ are $t$-cross-intersecting, then their sizes can be bounded as follows.
\begin{lem}\label{lem:cross-intersecting size}
    Assume $n>3\max\{a, b\}$, $\mathcal{A}\subset \binom{[n]}{a}, \mathcal{B}\subset\binom{[n]}{b}$ are non-empty $t$-cross-intersecting families. Then we have $$|\mathcal{A}|\le 2^{b}\binom{n-b}{a-t}, ~~|\mathcal{B}|\le 2^{a}\binom{n-a}{b-t}.$$
\end{lem}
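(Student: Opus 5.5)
The plan is a direct counting argument: fix one set from the other family and classify the sets of $\mathcal{A}$ by their trace on it. By symmetry it suffices to prove the bound on $|\mathcal{A}|$; the bound on $|\mathcal{B}|$ follows by exchanging the roles of $(\mathcal{A},a)$ and $(\mathcal{B},b)$, since the hypothesis $n>3\max\{a,b\}$ is symmetric.

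Since $\mathcal{B}$ is non-empty, fix some $B_0\in\mathcal{B}$, so $|B_0|=b$. By $t$-cross-intersection, every $A\in\mathcal{A}$ satisfies $|A\cap B_0|\ge t$. Group the members of $\mathcal{A}$ according to the trace $T=A\cap B_0$, which is a subset of $B_0$ with $|T|=j\ge t$.

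For a fixed such $T$, any $A$ with $A\cap B_0=T$ is determined by $A\setminus B_0$. This is an $(a-j)$-subset of the $(n-b)$-set $[n]\setminus B_0$, so there are at most $\binom{n-b}{a-j}$ such $A$ (and none if $j>a$). Summing over all traces gives
\[
|\mathcal{A}|\le \sum_{j=t}^{\min\{a,b\}} \binom{b}{j}\binom{n-b}{a-j}.
\]

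It then suffices to show $\binom{n-b}{a-j}\le\binom{n-b}{a-t}$ for every $j\ge t$. Once that holds, the sum is at most $\binom{n-b}{a-t}\sum_j\binom{b}{j}\le 2^b\binom{n-b}{a-t}$.

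The only point needing care is this monotonicity, and it is where the hypothesis on $n$ enters. The map $m\mapsto\binom{N}{m}$ is non-decreasing for $0\le m\le N/2$. Here $N=n-b$ and $0\le a-j\le a-t\le a$. Since $n>3\max\{a,b\}$, we have $n-b>2\max\{a,b\}\ge 2a$, so $a-t\le a<N/2$. Hence $\binom{n-b}{a-j}\le\binom{n-b}{a-t}$, which completes the bound for $|\mathcal{A}|$.

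The argument never needs the full strength of $n>3\max\{a,b\}$; $n-b\ge 2a$ (and symmetrically $n-a\ge 2b$) suffices. The stated hypothesis is presumably chosen for convenience in later applications.
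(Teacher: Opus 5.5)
Your proposal is correct and follows the paper's proof exactly: fix one $B_0\in\mathcal{B}$, sum $\binom{b}{j}\binom{n-b}{a-j}$ over the traces $A\cap B_0$ of size $j\ge t$, and bound each term by $\binom{n-b}{a-t}$ using monotonicity. Your justification of the monotonicity step is slightly more careful than the paper's one-line remark about $\binom{n}{i}$, because you apply it to the binomial $\binom{n-b}{\cdot}$ that actually appears and use $n-b>2a$.
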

\begin{proof}
    We only prove the inequality for $|\mathcal{A}|$, then $|\mathcal{B}|$ can be bounded using the same method.
    Fix any $B\in\mathcal{B}$. Every $A\in\mathcal{A}$ must satisfy $|A\cap B|\ge t$, hence
    \[\begin{aligned}
    |\mathcal{A}|&\le \bigl|\{A\mid |A\cap B|\ge t, |A| = a \}\bigr|\\
    &= \sum_{i = t}^{a}\binom{b}{i}\binom{n-b}{a-i} \le 2^b\binom{n-b}{a-t}.
    \end{aligned}\]
    The last inequality holds since when $n>3a$ and $i\le a$, $\binom{n}{i}$ is monotone in $i$.
\end{proof}

The following lemma shows that under the shifted assumption, if two disjoint subsets $S_1, S_2 \subseteq [\ell]$ both have large degree, then a contradiction can be derived.

\begin{lem}\label{lem:large degree intersecting}
    Let $\mathcal{F}$ be an $\ell$-shifted $k$-uniform intersecting family on $[n]$ with $n \ge 4k+\ell$, and let $S_1, S_2 \subseteq [\ell]$ be disjoint subsets. Then either
    \[
    d_{S_1}(\mathcal{F}) \le 2^{k-|S_2|}\binom{n-\ell-k+|S_2|}{k - \ell+|S_2|-1}
    \]
    or
    \[
    d_{S_2}(\mathcal{F}) \le 2^{k-|S_1|}\binom{n-\ell-k+|S_1|}{k - \ell+|S_1|-1}.
    \]
\end{lem}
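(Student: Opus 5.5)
Let $s_1=|S_1|$, $s_2=|S_2|$, and for $r=1,2$ put $\mathcal{A}_r=\{A\setminus[\ell]\mid A\in\mathcal{F},\ A\cap[\ell]=S_r\}$. Then $\mathcal{A}_r\subset\binom{[n]\setminus[\ell]}{k-s_r}$ and $|\mathcal{A}_r|=d_{S_r}(\mathcal{F})$. If either family is empty, the corresponding degree is $0$ and the statement holds trivially, so we assume both are non-empty.

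The plan is to show that $\mathcal{A}_1$ and $\mathcal{A}_2$ are $t$-cross-intersecting on the ground set $[n]\setminus[\ell]$ of size $n-\ell$, with $t=\ell+1-s_1-s_2$. Then Lemma~\ref{lem:cross-intersecting size} gives the conclusion directly.

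\textbf{Cross-intersection.} Take $A\in\mathcal{F}$ with $A\cap[\ell]=S_1$ and $B\in\mathcal{F}$ with $B\cap[\ell]=S_2$. Since $S_1\cap S_2=\emptyset$, we have $A\cap B\cap[\ell]=\emptyset$. Part (c) of Lemma~\ref{lem:intersecting lemma} then gives $s_1+s_2+|A\cap B|\ge \ell+1$. Moreover $A\cap B$ lies entirely outside $[\ell]$, so
\[
|(A\setminus[\ell])\cap(B\setminus[\ell])|=|A\cap B|\ge \ell+1-s_1-s_2=t.
\]
If $t\le 0$ the cross-intersection condition is vacuous. In that case the binomial coefficients in the statement have bottom entry $k-s_r-t\ge k-s_r$, and the bound needs separate care (see the last paragraph).

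\textbf{Applying Lemma~\ref{lem:cross-intersecting size}.} Take ground set size $n'=n-\ell$, $a=k-s_1$, $b=k-s_2$ and $t=\ell+1-s_1-s_2$. The lemma requires $n'>3\max\{a,b\}$. This follows from $n\ge 4k+\ell$, since then $n-\ell\ge 4k>3k\ge 3\max\{a,b\}$. The lemma yields
\[
|\mathcal{A}_1|\le 2^{b}\binom{n'-b}{a-t},\qquad |\mathcal{A}_2|\le 2^{a}\binom{n'-a}{b-t}.
\]
Here $n'-b=n-\ell-k+s_2$ and $a-t=k-s_1-\ell-1+s_1+s_2=k-\ell+s_2-1$. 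So the first bound is exactly $d_{S_1}\le 2^{k-s_2}\binom{n-\ell-k+s_2}{k-\ell+s_2-1}$, and symmetrically for $d_{S_2}$. Both bounds in fact hold simultaneously, which is stronger than the "either/or" claimed.

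\textbf{Main obstacle.} The substantive step is the invocation of Lemma~\ref{lem:intersecting lemma}(c); the rest is bookkeeping of parameters. What needs care is the degenerate regime $t\le 0$, i.e.\ $s_1+s_2\ge \ell+1$. This cannot happen: $S_1$ and $S_2$ are disjoint subsets of $[\ell]$, so $s_1+s_2\le\ell$ and hence $t\ge1$.

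Two further checks are needed in the proof of Lemma~\ref{lem:cross-intersecting size}. First, it uses monotonicity of $\binom{n'-b}{a-i}$ in $i$, which is the intended use and requires $n'-b>2(a-t)$; this is implied by $n'\ge 4k$. Second, when $a-t<0$ the bound reads $0$. This is consistent, because cross-intersection in at least $t>a$ elements is then impossible, which contradicts non-emptiness of both families.
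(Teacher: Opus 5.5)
Your proof is correct and matches the paper's argument. Like the paper, you pass to the traces $A\setminus[\ell]$ of the sets with $A\cap[\ell]=S_r$, use Lemma~\ref{lem:intersecting lemma}(c) to get $(\ell+1-|S_1|-|S_2|)$-cross-intersection on the $(n-\ell)$-element ground set, handle the empty case trivially, and apply Lemma~\ref{lem:cross-intersecting size} with $a=k-|S_1|$, $b=k-|S_2|$. Your definition of $\mathcal{A}_r$, which requires $A\cap[\ell]=S_r$ exactly, is the precise version of the paper's somewhat loosely written $\mathcal{F}(S)$, and your side checks (that $t\ge 1$, that $n-\ell>3\max\{a,b\}$, and the case $a<t$) are all sound.
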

\begin{proof}
    For a set $S\subset [\ell]$, let 
    \[\mathcal{F}(S):=\{A\backslash S|S\subset A\in \mathcal{F}\}.\]
    Then $d_S(\mathcal{F}) = |\mathcal{F}(S)|$.
    By Lemma \ref{lem:intersecting lemma} c), for each set $A\in \mathcal{F}(S_1)$, $B\in \mathcal{F}(S_2)$, we have $|A\cap B|\ge \ell+1 - |S_1| - |S_2|$, therefore $\mathcal{F}(S_1)\subset \binom{[n-\ell]}{k-|S_1|}$ and $\mathcal{F}(S_2)\subset\binom{[n-\ell]}{k-|S_2|}$ are $(\ell+1-|S_1|-|S_2|)$-cross-intersecting.

    If $\mathcal{F}(S_2)$ is empty, then $d_{S_2}(\mathcal{F}) = 0$ so the statement holds.
    If $\mathcal{F}(S_2)$ is non-empty, then by Lemma \ref{lem:cross-intersecting size},
    \[\begin{aligned}|\mathcal{F}(S_1)|&\le 2^{k-|S_2|}\binom{n-\ell-k+|S_2|}{k-|S_1|-(\ell+1 - |S_1| - |S_2|)} \\&= 2^{k-|S_2|}\binom{n-\ell-k+|S_2|}{k-\ell-1+|S_2|}.\end{aligned}\]
    Hence the statement is true.
\end{proof}

\begin{cor}\label{cor:large degree intersecting}
    Let $\mathcal{F}$ be an $\ell$-shifted $k$-uniform intersecting family. Suppose $m = \lfloor\frac{\ell-1}{2}\rfloor$, $n \ge 4k+\ell$, and define $\mathcal{G}$ as the following family of subsets of $[\ell]$: 
    \[\mathcal{G}:=\bigl\{S\subset [\ell] \mid |S|\le m, d_S(\mathcal{F})\ge 2^{k-m}\tbinom{n-k-\ell+m}{k-\ell+m}\bigr\}.\]Then $\mathcal{G}$ is intersecting.
\end{cor}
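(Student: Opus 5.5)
The plan is to argue by contradiction, using Lemma \ref{lem:large degree intersecting} directly. Suppose $S_1,S_2\in\mathcal{G}$ are disjoint, and write $s_1=|S_1|\le m$ and $s_2=|S_2|\le m$. Lemma \ref{lem:large degree intersecting} applies because $n\ge 4k+\ell$. By symmetry we may assume its first alternative holds:
\[
d_{S_1}(\mathcal{F})\le 2^{k-s_2}\binom{n-\ell-k+s_2}{k-\ell+s_2-1}.
\]
It then suffices to show that the right-hand side is strictly smaller than the threshold $T:=2^{k-m}\binom{n-k-\ell+m}{k-\ell+m}$ defining $\mathcal{G}$. That contradicts $d_{S_1}(\mathcal{F})\ge T$, so no two members of $\mathcal{G}$ are disjoint.

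For the comparison, set $N=n-k-\ell$ and $K=k-\ell$. The aim is the chain of inequalities
\[
2^{k-s}\binom{N+s}{K+s-1}<2^{k-s}\binom{N+s}{K+s}\le 2^{k-m}\binom{N+m}{K+m}\qquad (s=s_2\le m).
\]
The two steps are as follows.
\begin{enumerate}[label=(\roman*)]
\item Raising the lower index of the binomial coefficient by one multiplies it by $\frac{N-K+1}{K+s}=\frac{n-2k+1}{k-\ell+s}$. Since $k-\ell+s\le k-\ell+m<k$ and $n-2k+1>k$, this factor exceeds $1$, which gives the strict first inequality.
\item Going from $j$ to $j+1$ in $\binom{N+j}{K+j}$ multiplies it by $\frac{N+j+1}{K+j+1}$. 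Here $N+j+1\ge n-k-\ell\ge 3k$ (using $n\ge 4k+\ell$) and $K+j+1\le k-\ell+m+1\le k$, so the factor is at least $3\ge 2$. Hence each of the $m-s$ increments of $j$ more than compensates for the factor $2$ lost in $2^{k-j}$, which gives the second inequality.
\end{enumerate}

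The main obstacle I expect is bookkeeping at degenerate index ranges rather than any real difficulty. If $k-\ell+s-1<0$, the Lemma's bound is $0$; the contradiction with $d_{S_1}(\mathcal{F})\ge T$ then needs $T>0$, that is $k-\ell+m\ge 0$. This holds in the intended regime $\ell\le k+1$ with $m\ge 1$, since then $k-\ell+m\ge m-1\ge 0$. When $k-\ell+m<0$ the threshold is $0$ and the statement must be read as requiring $d_S(\mathcal{F})>0$; I would state this convention explicitly. Apart from this, the argument is just the two ratio estimates above, each relying on $n\ge 4k+\ell$.
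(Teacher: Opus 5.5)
Your argument is correct and is essentially the paper's: both combine Lemma~\ref{lem:large degree intersecting} with monotonicity in $j$ of $2^{k-j}\binom{n-k-\ell+j}{\cdot}$ (guaranteed by $n-k-\ell\ge 3k$) and a one-step increase of the lower index. The paper proves monotonicity for lower index $k-\ell+j-1$ and raises the index only at $j=m$, while you raise it first at $j=s_2$, which makes no difference. Your remark that the strict inequality needs $k-\ell+m\ge 0$ is a real, if minor, point the paper passes over; note that the paper also applies the corollary with $\ell=k+2$, outside your stated regime $\ell\le k+1$, but there $k-\ell+m=m-2\ge 0$ still holds once $k\ge 3$.
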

\begin{proof}
    Let $f(j) = 2^{k-j}\binom{n-k-\ell+j}{k-\ell+j-1}$.
    Note that when $n>3k$, for $j \le m \le \ell$, we have
    \[\begin{aligned}
    f(j+1) & = f(j) \cdot \frac{\binom{n-k-\ell+j+1}{k-\ell+j}}{2 \binom{n-k-\ell+j}{k-\ell+j-1}}=f(j) \cdot \frac{n-k-\ell+j+1}{2(k-\ell+j)} \ge f(j).
    \end{aligned}\]
 If $S_1, S_2\in \mathcal{G}$ are disjoint, then by Lemma \ref{lem:large degree intersecting}, we have either 
    \[
    d_{S_1}(\mathcal{F}) \le 2^{k-|S_2|}\binom{n-\ell-k+|S_2|}{k - \ell+|S_2|-1}
    \]
    or
    \[
    d_{S_2}(\mathcal{F}) \le 2^{k-|S_1|}\binom{n-\ell-k+|S_1|}{k - \ell+|S_1|-1}.
    \]
    Without loss of generality, we assume that $d_{S_1}(\mathcal{F})\le 2^{k -|S_2|}\binom{n-k-\ell+|S_2|}{k-\ell-1+|S_2|}$, then by the monotonicity of $f(j)$, we have
    \[\begin{aligned}
    d_{S_1}(\mathcal{F})&\le 2^{k -|S_2|}\binom{n-k-\ell+|S_2|}{k-\ell-1+|S_2|}=f(|S_2|) \le f(m) \\
    &=2^{k-m} \binom{n-k-\ell+m}{k-\ell+m-1}< 2^{k - m}\binom{n-k-\ell+m}{k-\ell+m}.
    \end{aligned}\]
    This contradicts the assumption $S_1\in \mathcal{G}$.
\end{proof}

\noindent\textbf{Proof of Theorem \ref{thm:k+2 degree}.}\medskip

Let $m = \lfloor (k+1)/2\rfloor$. By Proposition \ref{prop:shifted assumption}, we may assume that $\mathcal{F}$ is $(k+2)$-shifted and that $d_1 \ge d_2 \ge \dots \ge d_{k+2}$ are the $k+2$ largest degrees. For a subset $X\subset [k+2]$, we define the degree of $X$ in $\mathcal{F}$ as 
\[d_X(\mathcal{F}) = \bigl|\{A\in\mathcal{F}\mid A\cap [k+2] =X\}\bigr|.\]
We are given the condition $n > 12k$. Let $\mathcal{G}$ be the following family:
\[\mathcal{G}:=\bigl\{S\subset [k+2] \mid |S|\le m, d_S(\mathcal{F})\ge 2^{k-m}\tbinom{n-2k-2+m}{m-2}\bigr\}.\]
By Corollary \ref{cor:large degree intersecting} with $\ell = k+2$, $\mathcal{G}$ is an intersecting family in $2^{[k+2]}$. Its upward closure $\overline{\mathcal{G}}$ is also intersecting. Therefore Lemma \ref{lem:uniform low degree} shows there exists some element $i \in [k+2]$ such that $d^{(j)}_i(\overline{\mathcal{G}})\le \binom{k}{j-2}$ for $j = 1, 2, \dots, m$, in particular, $d^{(2)}_i(\overline{\mathcal{G}})\in \{0, 1\}$. Below we discuss two cases according to the value of $d_i^{(2)}(\overline{\mathcal{G}})$.\medskip

\noindent\textbf{Case 1:} $d_i^{(2)}(\overline{\mathcal{G}}) = 0$.\smallskip

We split the sets in $\mathcal{F}$ that contain $i$ into the following two parts and estimate the size of each part separately:
\[D_1 = \big\{A\big|i\in A\in\mathcal{F}, A\cap [k+2]\in \mathcal{G}\big\},\]
\[D_2 = \big\{A\big|i\in A\in \mathcal{F}, A\cap[k+2]\not\in\mathcal{G}\big\}.\]
We first estimate $|D_1|$:
\[\begin{aligned}
    |D_1| &= \sum_{i\in X\in\mathcal{G}}d_X(\mathcal{F})
     \le \sum_{j\ge 2}d_i^{(j)}(\mathcal{G})\binom{n-k-2}{k-j}
    \le \sum_{j\ge 2}d_i^{(j)}(\overline{\mathcal{G}})\binom{n-k-2}{k-j}\\
    &\le \sum_{j\ge 3}\binom{k}{j-2}\binom{n-k-2}{k-j} = \binom{n}{k-2} - \binom{n-k-2}{k-2}.
\end{aligned}\]
The first inequality is because for fixed $X\in \mathcal{G}$, the number of $k$-set $A$ with $A\cap [k+2] = X$ is at most $\binom{n-k-2}{k-|X|}$. The second inequality follows from $\mathcal{G}\subset \overline{\mathcal{G}}$. The third inequality follows from Lemma \ref{lem:uniform low degree} and $d^{(2)}_i(\overline{\mathcal{G}}) = 0$.

Next we estimate $|D_2|$. We prove the following claim: for $X\notin \mathcal{G}$, we have $d_{X}(\mathcal{F})\le 2^{k-m}\binom{n-k-2}{k-m}$.

First suppose $|X|\le m$, since $2m\le k+1<k+2$, i.e. $k-m>m-2$, by the definition of $\mathcal{G}$,  we have
\[d_X(\mathcal{F})\le 2^{k-m}\binom{n-2k-2+m}{m-2}\le 2^{k-m}\binom{n-k-2}{k-m}.\]
If $|X|> m$, then
\[d_X(\mathcal{F})\le \binom{n-k-2}{k-m}\le 2^{k-m}\binom{n-k-2}{k-m}.\]

Using the claim, $|D_2|$ can be bounded by
\[\begin{aligned}
    |D_2| &= \sum_{X\not\in\mathcal{G}}d_X(\mathcal{F})
    \le \sum_{X\subset [k+2]} 2^{k-m}\binom{n-k-2}{k-m}
    \le 2^{2k+2-m}\binom{n-k-2}{k-m}.
\end{aligned}\]
Next we show that for sufficiently large $k$, when $n>12k$, we have
\[2^{2k+2-m}\binom{n-k-2}{k-m}\le \binom{n-k-2}{k-2}.\]
Recall that  $k/2\le m\le (k+1)/2$, thus
\[\begin{aligned}
    \dfrac{\binom{n-k-2}{k-2}}{\binom{n-k-2}{k-m}} &= \prod_{j = 1}^{m-2} \dfrac{n-2k+j}{k-m+j}
    \ge \left(\dfrac{n-2k}{k}\right)^{m-2}
    \ge 10^{m-2} \ge 10^{\frac{k}{2}-2}.
\end{aligned}\]
When $k$ is sufficiently large, 
\[\begin{aligned}
    2^{2k+2-m}&\le 2^{\frac{3k}{2}+2} = 4\cdot 8^{\frac{k}{2}}\le 10^{-2}\cdot 10^{\frac{k}{2}}\le\dfrac{\binom{n-k-2}{k-2}}{\binom{n-k-2}{k-m}}.
\end{aligned}\]
This implies that \[d_i = |D_1| + |D_2|\le \binom{n-2}{k-2} - \binom{n-k-2}{k-2}+\binom{n-k-2}{k-2}\le \binom{n-2}{k-2}.\]
which completes the proof of Case 1.\\

\noindent\textbf{Case 2:} $d_i^{(2)}(\overline{\mathcal{G}}) = 1$.\smallskip

Assume, without loss of generality, that $\{1, i\} \in \overline{\mathcal{G}}$. Because $\overline{\mathcal{G}}$ is upward closed, for any $j \le m$ there are exactly $\binom{k}{j-2}$ $j$-element subsets of $[k+2]$ that contain $\{1, i\}$. Since $d_i^{(j)}(\overline{\mathcal{G}}) \le \binom{k}{j-2}$ for $j \le m$ by Lemma \ref{lem:uniform low degree}, every $j$-element set in $\overline{\mathcal{G}}$ containing $i$ must also contain $1$.

Moreover, because $\overline{\mathcal{G}}$ is intersecting and $\{1,i\} \in \overline{\mathcal{G}}$, every set in $\overline{\mathcal{G}}$ must intersect $\{1,i\}$. Combined with the previous observation, it follows that \emph{every} set in $\overline{\mathcal{G}}$ must contain the element $1$.
Therefore, for every $j \neq 1$, we still have $d_j^{(u)}(\overline{\mathcal{G}}) \le \binom{k}{u-2}$ for all $u \le m$ (the same bound as in the Lemma \ref{lem:uniform low degree}).

Now we discuss two subcases.\smallskip

\noindent\textbf{Subcase 2.1:} There exists an element $j \neq 1$ such that $d_j^{(2)}(\overline{\mathcal{G}}) = 0$. In this situation, we can apply exactly the same argument as in Case 1 (with $j$ playing the role of $i$) to conclude that $d_{k+2}(\mathcal{F}) \le \binom{n-2}{k-2}$.\smallskip

\noindent\textbf{Subcase 2.2:} For every $j \neq 1$, we have $d_j^{(2)}(\overline{\mathcal{G}}) = 1$. This means that $\{1, j\} \in \overline{\mathcal{G}}$ for all $j \in [k+2] \setminus \{1\}$. Then either $\{1\} \in \mathcal{G}$ or, for each $j \neq 1$, the set $\{1, j\}$ itself belongs to $\mathcal{G}$. We claim that every set in $\mathcal{F}$ contains the element $1$.

Now suppose, for contradiction, that there exists a set $A \in \mathcal{F}$ that does \emph{not} contain the element $1$. We can pick an element $i \notin A \cap [k+2]$ with $i \neq 1$.
Recall that $\mathcal{F}$ is $(k+2)$-shifted. By repeatedly applying Proposition \ref{prop:set shifting}, we can shift the set $A$ into $[k+2]\setminus\{1, i\}$, and conclude that $A' := [k+2]\setminus\{1, i\} \in \mathcal{F}$. 
Because either $\{1,i\} \in \mathcal{G}$ or $\{1\} \in \mathcal{G}$, there exists a set $B \in \mathcal{F}$ such that $B \cap [k+2] = \{1, i\}\text{ or }\{1\}$. In either case we have $A' \cap B = \emptyset$, a contradiction.
Therefore, every set in $\mathcal{F}$ must contain the element $1$. Consequently,
\[
d_{k+2}(\mathcal{F}) \le \binom{n-2}{k-2}.
\]

This completes the proof for Case 2, and thus the proof of Theorem \ref{thm:k+2 degree}. \qed

\medskip

Using similar techniques, we proof Theorem \ref{thm:l+1 degree}.\medskip

\noindent\textbf{Proof of Theorem \ref{thm:l+1 degree}.\medskip}

Let $m = \lfloor\frac{\ell}{2}\rfloor$. By Proposition \ref{prop:shifted assumption}, we can assume that $\mathcal{F}$ is $(\ell+1)$-shifted. Throughout the proof, we define the degree $d_X$ of a subset $X\subset[\ell+1]$(with respect to $\mathcal{F}$) by
\[d_X(\mathcal{F}):=\bigl|\{A\in\mathcal{F}\mid A\cap [\ell+1] = X\}\bigr|.\]
Denote by $\mathcal{G}$ the following set family:
\[\mathcal{G}:=\big\{X\subset [\ell+1]\big||X|\le m, d_X(\mathcal{F})\ge 2^k\binom{n-k}{k-m}\big\}\]
Since $2^k>2^{k-m}, k-m\ge k-\ell-1+m$, and $n-k\ge n-k-\ell-1+m$, when $n>5k$ the monotonicity of binomial coefficients yields 
\[2^k\binom{n-k}{k-m}\ge 2^{k-m}\binom{n-k-\ell-1+m}{k-\ell-1+m},\] so by Corollary \ref{cor:large degree intersecting}, $\mathcal{G}$ is an intersecting family on $2^{[\ell+1]}$, then by Lemma \ref{lem:uniform low degree}, there is an $i\in [\ell+1]$ such that $d_i^{(j)}(\overline{\mathcal{G}})\le \binom{\ell-1}{j-2}$ for $1\le j\le m$.
If $d_i^{(2)}(\overline{\mathcal{G}}) = 1$, without loss of generality, we may assume that $\{1, i\}\in\overline{\mathcal{G}}$. Next we consider two cases according to the value of $d_{\{1, i\}}(\overline{\mathcal{G}})$.

\noindent\textbf{Case 1:} $d_i^{(2)}(\overline{\mathcal{G}}) = 0$ or $d_{\{1, i\}}(\mathcal{F}) \le \binom{n-\ell-1}{k-2} - \binom{n-k-2}{k-2}$.\smallskip

We split the set in $\mathcal{F}$ that contain $i$ into the following two parts and estimate the size of each part separately:
\[D_1 = \{A|i\in A\in \mathcal{F}, A\cap [\ell+1]\in\mathcal{G}\}\]
\[D_2 = \{A|i\in A\in\mathcal{F}, A\cap [\ell+1]\not\in \mathcal{G}\}\]

$|D_1|$ can be bounded using the inequalities $d_i^{(j)}(\overline{\mathcal{G}})\le \binom{\ell-1}{j-2}$:
\[\begin{aligned}
    |D_1| &= \sum_{i\in X\in\mathcal{G}} d_X(\mathcal{F})\\
    & \le \sum_{j\ge3}d_i^{(j)}(\mathcal{G})\binom{n-\ell-1}{k-j} + d_{\{1, i\}}(\mathcal{F})\\
     & \le \sum_{j\ge3}d_i^{(j)}(\overline{\mathcal{G}})\binom{n-\ell-1}{k-j} + d_{\{1, i\}}(\mathcal{F})\\
    &\le \sum_{j \ge 3}\binom{\ell-1}{j-2}\binom{n-\ell-1}{k-j}+\binom{n-\ell-1}{k-2}-\binom{n-k-2}{k-2}\\
    & = \binom{n-2}{k-2} - \binom{n-k-2}{k-2}
\end{aligned}\]

Next we estimate $|D_2|$. We claim that for $X\not\in \mathcal{G}$, we have $d_X(\mathcal{F})\le 2^{k}\binom{n-k}{k-m}$. 

If $|X|\le m$, then the inequality holds follows from the definition of $\mathcal{G}$. If $|X|>m$, then
\[\begin{aligned}
    d_X(\mathcal{F})&\le \binom{n-\ell-1}{k-m}
    \le \sum_{j= 0}^{k-m}\binom{n-k}{j}\binom{k-\ell-1}{k-m-j}
    \le 2^{k-\ell-1}\binom{n-k}{k-m}
    \le 2^k\binom{n-k}{k-m}
\end{aligned}\]
Hence $|D_2|$ can be bounded by the following inequality,
\[\begin{aligned}
    |D_2|&\le \sum_{i\in X\not\in \mathcal{G}}d_X(\mathcal{F})
     \le 2^{\ell+1}\cdot 2^{k}\binom{n-k}{k-m}. 
\end{aligned}\]
We want to show that, there exists some constant $C_\varepsilon$, such that when $\ell \ge \varepsilon k$ and $n\ge C_{\varepsilon}k$, 
\[2^{\ell+k+1}\binom{n-k}{k-m}\le \binom{n-k-2}{k-2}.\]
In fact, we can derive the following:
\[\begin{aligned}
    \dfrac{\binom{n-k-2}{k-2}}{\binom{n-k}{k-m}}&\ge \left(\dfrac{n-2k}{k}\right)^{m-2}\cdot \left(\dfrac{n-2k}{n-k}\right)^2
    \ge 2^{-2}\left(\dfrac{n}{k}-2\right)^{\frac{\ell}{2}-3}.
\end{aligned}\]
When $C_{\varepsilon} \ge 3^{2+\frac{2}{\varepsilon}}>3^{2 +\frac{2k}{\ell}}$, for sufficiently large $k$, when $n\ge C_{\varepsilon}k$,
\[\begin{aligned}
    \dfrac{\binom{n-k-2}{k-2}}{\binom{n-k}{k-m}}
    \ge 2^{-2}\left(\dfrac{n}{k}-2\right)^{\frac{\ell}{2}-3}
    \ge 2^{-2} \left(3^{2+\frac{2k}{\ell}}\right)^{\frac{\ell}{2}-3}
    \ge 2^{k+\ell+1}.
\end{aligned}\]
Where the last inequality holds when $k$ is sufficiently large.

Therefore, taking $C_\varepsilon = 3^{2+\frac{2}{\varepsilon}}$, in Case 1 we have \[d_i(\mathcal{F})\le \binom{n-2}{k-2}\le \binom{n-2}{k-2} + \binom{n-\ell-1}{k-\ell},\]
which is what we wanted.\\

\noindent\textbf{Case 2:} $d_i^{(2)}(\overline{\mathcal{G}}) = 1$ and $d_{\{1, i\}}(\mathcal{F}) > \binom{n-\ell-1}{k-2} - \binom{n-k-2}{k-2}$.
Then, by the same argument as in the proof of Theorem \ref{thm:k+2 degree}, we have $d_{j}^{(u)}(\overline{\mathcal{G}}) \le \binom{\ell-1}{u-2}$ for all $j \neq 1$ and $u \le m$. Consequently, if for some $j \neq 1$ we have $d_{\{1, j\}}(\mathcal{F}) \le \binom{n-\ell-1}{k-2} - \binom{n-k-2}{k-2}$, then the proof is already complete (by reducing to the situation of Case 1). 
Thus we may assume that for every $j \neq 1$,
\[
d_{\{1, j\}}(\mathcal{F}) > \binom{n-\ell-1}{k-2} - \binom{n-k-2}{k-2}.
\]
We shall show that under this assumption every set in $\mathcal{F}$ must either contain the element $1$ or contain the whole set $\{2,3,\dots ,\ell+1\}$.

Assume, for contradiction, that there exists a set $A \in \mathcal{F}$ that not contains $1$ and $i$ with $2 \le i \le \ell+1$. By part (a) of Lemma \ref{lem:intersecting lemma} (upward‑closedness), we can find a set $B \in \mathcal{F}$ such that $B \cap [\ell+1] = [\ell+1] \setminus \{1, i\}$. Then $|B \setminus [\ell+1]| = k - \ell + 1$.

Now consider the family
\[
\mathcal{F}(\{1, i\}) := \{ A \setminus \{1, i\} \mid A\cap [\ell+1] = \{1, i\}, A \in \mathcal{F} \}.
\]
Every member of $\mathcal{F}(\{1, i\})$ must intersect $B \setminus [\ell+1]$ (otherwise the corresponding original set $A$ would be disjoint from $B$). Hence
\[
|\mathcal{F}(\{1, i\})| \le \binom{n-\ell-1}{k-2} - \binom{n-k-2}{k-2}.
\]
But $|\mathcal{F}(\{1, i\})| = d_{\{1, i\}}(\mathcal{F})$, which contradicts our assumption that $d_{\{1, i\}}(\mathcal{F}) > \binom{n-\ell-1}{k-2} - \binom{n-k-2}{k-2}$. Therefore, in this case every set in $\mathcal{F}$ either contains $1$ or contains $\{2,3,\dots ,\ell+1\}$. Consequently,
\[
d_{\ell+1}(\mathcal{F}) \le \binom{n-2}{k-2} + \binom{n-\ell-1}{k-\ell}.
\] \qed

\section{Concluding Remarks}\label{sec:conclusion}

In this paper, we study the problem of determining the maximum value of the $(\ell+1)$-th largest degree of an intersecting family $\mathcal{F} \subset \binom{[n]}{k}$. For $\ell=2k$, Theorem \ref{thm:2k+1 degree} provides the precise range that the $1$-star gives the maximum, resolving the Frankl--Wang Conjecture. For $\ell=k+1$, Theorem \ref{thm:k+2 degree} shows that the $1$-star example gives the maximum for $n>12k$. A natural question is whether the constant $12$ can be reduced.

\begin{que}\label{que:2k+3 conjecture}
    Can we find a constant $M$ such that for every $n \ge 2k + M$, every intersecting family $\mathcal{F} \subset \binom{[n]}{k}$ satisfies
    \[
    d_{k+2}(\mathcal{F}) \le \binom{n-2}{k-2}?
    \]
    In particular, is $M = 3$ already sufficient?
\end{que}

% \section{concluding remarks}

% In this paper, we mainly consider the $(k+2)$th largest degree and the $(2k+1)$th degree for the intersecting set family. We also give an approach to bound the $l+1$ largest degree for $l\le k$ when $n$ is large. Here are some intriguing problem about the degree Erd\H{o}s-Ko-Rado problem.

% \begin{que}\label{2k+3 conjecture}
%     Are there some constant $M$, such that, for $n \ge 2k+M$, $d_{k+2}\le \Comb{n-2}{k-2}$. Furthermore, does $M=3$ big enough?
% \end{que}

As a remark to Question \ref{que:2k+3 conjecture}, we present the following examples for $n = 2k+2$ and $k \in \{4,5,7\}$. In all these examples, there exists an intersecting family whose $(k+2)$-th degree is strictly greater than $\binom{n-2}{k-2}$.

\begin{exam}\label{exam:counter 2k+2}
    Let $k = 4$, $n = 10$. Construct $\mathcal{F}$ as the union of the following two families:
    \begin{enumerate}[label=(\alph*)]
        \item all $4$-element subsets of $[6]$, i.e., $\binom{[6]}{4}$;
        \item all sets of the form $A \cup \{b\}$, where $A$ is a block of a $2\text{-}(6,3,2)$ design and $b \in \{7,8,9,10\}$.
    \end{enumerate}
    One can verify that $\mathcal{F}$ is intersecting and that the $6$th largest degree in $\mathcal{F}$ equals $30$, which exceeds $\binom{n-2}{k-2} = \binom{8}{2} = 28$.
\end{exam}

\begin{exam}\label{exam:counter k5}
    For $k = 5$ and $n = 12$, take a Fano plane on $[7]$, denoted by $\mathcal{P}$ (a Steiner triple system $S(2,3,7)$). Let $\mathcal{F}$ consist of all $5$-element subsets of $[12]$ that contain at least one triple from $\mathcal{P}$. It can be checked that $\mathcal{F}$ is intersecting under this construction, and that $d_{7}(\mathcal{F}) = 125 > \binom{10}{3} = 120$.
\end{exam}

\begin{exam}\label{exam:counter k6}

For $k=7$ and $n=16$, we let the ground set be $[16]$ and let $\mathcal{F}$ consists of all the $7$-sets whose intersection with $[9]$ contains a subset from the family $\mathcal{G}=\{S \cup \{i\}: S \in \mathcal{H}, i \in \{7,8,9\}\} \cup \{\{j,7,8,9\}: j \in [6]\}$.
Here $\mathcal{H}$ is the unique $2$-$(6,3,2)$ desing on $[6]$. Counting gives  $d_9(\mathcal{F})=2023>2002=\binom{n-2}{k-2}$.
\end{exam}

Additionally, when $n = 2k+1$ for an odd integer $k \ge 3$, we may take $\mathcal{F}$ to consist of all $k$-sets that intersect $[k+2]$ in at least $\frac{k+3}{2}$ elements. It is straightforward to check that  $d_{k+2}(\mathcal{F}) > \binom{n-2}{k-2}$. We omit the calculations here.

% As a remark to the Question \ref{2k+3 conjecture}, we have the following counter example for $n = 2k+2$:
% \begin{exam}
%     When $k = 4$, $n = 10$, we take all the following sets to form $\mathcal{F}$:

%     a) $\Comb{[6]}{4}$

%     b) $A\cup \{b\}$, here $A$ belongs to a $2-(6, 3, 2)$ design \cite{todo}, $b\in \{7, 8, 9, 10\}$.

%     We can verify that $\mathcal{F}$ is intersecting, and the 6th largest degree in $\mathcal{F}$ is 30, larger than $\Comb{n-2}{k-2} = \Comb{8}{2} = 28$.
% \end{exam}
% \begin{rmk}
%     When $k = 5$, $n = 12$, we can also find a counter example in the following way: first we choose a Fano plane on [7], says $\mathcal{P}$, then choose all $5$-sets contains some triple in $\mathcal{P}$, we can verify that $\mathcal{F}$ is intersecting under the construction, and $d_{7}(\mathcal{F}) = 125 > \Comb{10}{3}$.
% \end{rmk}

Regarding the $(\ell+1)$-th largest degree with $\ell \le k$, our Theorem \ref{thm:l+1 degree}  shows that when $\ell = \Theta(k)$, the bound $d_{\ell+1} \le \binom{n-2}{k-2} + \binom{n-\ell-1}{k-\ell}$ holds for $n > \Omega(k)$. Since Frankl and Wang showed that the same bound is valid for $\ell = \Omega(1)$, a natural question is whether a unified statement can be made for all $\ell$.

\begin{que}\label{que:general l}
    Can we find an absolute constant $C$ such that for every $4 \le \ell \le k$ and every $n > Ck$, any intersecting family $\mathcal{F} \subset \binom{[n]}{k}$ (with degrees ordered as $d_1 \ge d_2 \ge \cdots \ge d_n$) satisfies
    \[
    d_{\ell+1}(\mathcal{F}) \le \binom{n-2}{k-2} + \binom{n-\ell-1}{k-\ell}?
    \]
    If such a constant exists, what is the smallest possible value of $C$?
\end{que}
\bigskip
\noindent {\bf Acknowledgment.} The authors thank Yongtao Li for pointing out a misstated theorem used in the previous version of this paper.
% 参考文献
\bibliographystyle{amsplain}
\bibliography{citations}  
% \begin{thebibliography}{99}
% \bibitem{key1} 作者. 标题. 期刊, 年份.
% \bibitem{key2} 作者. 标题. 会议, 年份.
% \bibitem{key3} 作者. 书名. 出版社, 年份.
% \end{thebibliography}

% 附录（可选）
\appendix
% \section{附录标题}
\label{app:technical}

\end{document}